\newtheorem{theorem}{Theorem}[section]
\newtheorem{lemma}[theorem]{Lemma}
\theoremstyle{definition}
\newtheorem{definition}[theorem]{Definition}
\newtheorem{example}{Example}[section]
\begin{document}

\title{A formulation of Noether's theorem for fuzzy problems of the
calculus of variations}

\author{J. Soolaki, O.S. Fard, R. Almeida \and A.H. Borzabadi}

\address{Javad Soolaki \newline
\indent Department of Mathematics, Damghan University, Damghan, Iran}
\email{javad.soolaki@gmail.com}

\address{Omid Solaymani Fard\newline
\indent School of Mathematics and Computer Science, Damghan University, Damghan, Iran}
\email{osfard@du.ac.ir, omidsfard@gmail.com}

\address{Ricardo Almeida\newline
\indent Center for Research and Development in Mathematics and Applications (CIDMA)\\
Department of Mathematics, University of Aveiro, 3810--193 Aveiro, Portugal}
\email{ricardo.almeida@ua.pt}

\address{Akbar Hashemi Borzabadi\newline
\indent School of Mathematics and Computer Science, Damghan University, Damghan, Iran}
\email{borzabadi@du.ac.ir}

\subjclass[2010]{93C42 (Primary); 34N05, 93D05 (Secondary).}

\keywords{Fuzzy  Noether's theorem, Fuzzy  Euler--Lagrange conditions, Fuzzy conservation law.}

\begin{abstract}
The theory of the calculus of variations for fuzzy
systems was recently initiated in \cite{Farhadinia}, with the proof of the fuzzy Euler--Lagrange equation. Using fuzzy Euler--Lagrange equation,   we obtain here a Noether--like theorem
for fuzzy variational problems.
\end{abstract}

\maketitle


\section{Introduction}
The notion of conservation law is well
known in Physics. Many  examples of conservation laws appear in modern physics: in classic,
quantum, and optical mechanics; in the theory of relativity, etc \cite{Kosmann,Torres1}. The   conservation laws  can materially simplify
the problem of finding extremals when the order of the conservation law is less
than that for the corresponding Euler--Lagrange equation. But it is not obvious how one might derive
a conservation law. Some functionals may have several conservation laws; others may have no
conservation laws. A central result called Noether's theorem links conservation laws with
certain invariance properties of the functional, and it provides an algorithm
for finding the conservation law.
 In the last decades, Noether's principle has been formulated in various contexts
(see \cite{Torres,Torres1} and references therein).  In this work we generalize Noether's theorem for fuzzy variational problems.\par
The fuzzy calculus of variations extends
the classical variational calculus by considering fuzzy variables and their derivatives into the variational integrals to be extremized. This may  occur naturally in many problems of physics and mechanics. Very few works has been done to the fuzzy variational problems. Recently, Farhadinia \cite{Farhadinia} studied necessary optimality conditions for fuzzy variational problems by using the fuzzy differentiability concept due to Buckley and Feuring \cite{Buckley}. In \cite{Fard},  Fard and
Zadeh by using $\alpha$--differentiability concept obtained an extended fuzzy
Euler-Lagrange condition.  Fard et al.\cite{Fard1} presented the fuzzy Euler--Lagrange conditions for fuzzy constrained and unconstrained variational problems under the generalized Hukuhara differentiability. Here we use the results of \cite{Farhadinia,Fard,Fard1}, to generalize
Noether's theorem for the more general context of the fuzzy calculus of variations.\par
The paper is organized as follows. Section \ref{1} presents some preliminaries needed in the sequel. In Section \ref{2} fuzzy Noether's theorem is formulated and proved. The method is based on a two-step procedure: it starts with an invariance
notion of the integral functional under a one--parameter infinitesimal group of transformations,
without changing the time variable; then it proceeds with a time--reparameterization to obtain
Noether's theorem in general form. We discuss the applicability
of the main theorems through an example in Section \ref{3}. Finally, we give a  conclusion in Section \ref{4}.


\section{Preliminaries}
\label{1}
The fuzzy number $\tilde{a}:\mathbb{R}\longrightarrow [0,1]$ is a mapping with the properties: (i) $\tilde{a}$ is normal, i.e., there exists an $x \in \mathbb{R}$ such that  $\tilde{a}(x)=1$; (ii)  $\tilde{a}$ is fuzzy convex, i.e., $\tilde{a}(\lambda x +(1-\lambda) y) \geq \min \{\tilde{a}(x), \tilde{a}(y) \}$ for all $\lambda \in [0,1], x,y \in[0,1];$  (iii) $\tilde{a}$ is upper semicontinuous, i.e.,  $\tilde{a} (x_0) \geq \overline{lim}_{k\rightarrow \infty}\tilde{a} (x_{k})$  for any $x_k\in \mathbb{R},$ as $x_k \rightarrow x_0;$ (iv) the support of $\tilde{a}$ which is $supp (\tilde{a}) = cl\{ x\in \mathbb{R}: \tilde{a} (x)>0\} $ is compact.\par
We denote by  $\mathcal{F}_R$ the set of all fuzzy numbers on $\mathbb{R}$. The $r-$level set of  $\tilde{a}\in \mathcal{F}_R$,  denoted by $[\tilde{a}]^r$ is defined by $[\tilde{a}]^r=\{ x\in \mathbb{R}: \tilde{a} (x)\geq r \}$ for all  $r\in[0,1].$ The 0-level set $[\tilde{a}]^0$ is defined as the closure of  $\{ x\in \mathbb{R}: \tilde{a} (x)\geq 0 \}$, i.e., $[\tilde{a}]^0= cl(supp(\tilde{a}))$.\par
Obviously, the  $r$-level set $[\tilde{a}]^r=[\underline{a}^r,\overline{a}^r]$ is a closed interval in $\mathbb{R}$ for all $r\in[0,1]$, where $\underline{a}^r$ and $\overline{a}^r$ denote the left-hand
and right-hand endpoints of $[\tilde{a}]^r$, respectively. Needless to say that $\tilde{a}$ is a crisp number with value $k$ if its membership
function is given by $\tilde{a}(x)=1$ if $x=k,$ and $\tilde{a}(x)=0$ otherwise. Also we define fuzzy zero as$$
\tilde{0}_{x}=
\begin{cases}
1 & \text{ if } x=0,\\
0 & \text{ if } x\neq 0.
\end{cases}
$$
By the following lemma, we present some interesting properties associated to   $\underline{a}^r$ and $\overline{a}^r$ of a fuzzy number
$\tilde{a}\in \mathcal{F}_R$.
\begin{lemma}[See Theorem~1.1 of \cite{MR0825618} and Lemma~2.1 of \cite{MR2609258}]\label{lem1}
If $\underline{a}^r:[0,1] \rightarrow \mathbb{R}$ and
$\overline{a}^r:[0,1] \rightarrow \mathbb{R}$
satisfy the conditions
\begin{enumerate}
\item[(i)]  $\underline{a}^r:[0,1] \rightarrow \mathbb{R} $ is a bounded nondecreasing function,
\item[(ii)]$\overline{a}^r:[0,1] \rightarrow \mathbb{R} $ is a bounded nonincreasing function,
\item[(iii)]  $\underline{a}^1\leq \overline{a}^1$,
\item[(iv)] for $0<k\leq 1$, $\lim_{r\rightarrow k^{-}}\underline{a}^r =\underline{a}^k$
and $\lim_{r\rightarrow k^{-}}\overline{a}^r =\overline{a}^k$,
\item[(v)] $\lim_{r\rightarrow 0^{+}}\underline{a}^r =\underline{a}^0$
and $\lim_{r\rightarrow 0^{+}}\overline{a}^r =\overline{a}^0$,
\end{enumerate}
then $\tilde{a}:\mathbb{R}\rightarrow [0,1]$, characterized by
$\tilde{a}(t)=\sup\{r|\underline{a}^r\leq t \leq \overline{a}^r\}$,
is a fuzzy number with $[\tilde{a}]^{r}=[\underline{a} ^r,\overline{a}^r]$.
The converse is also true: if $\tilde{a}(t)=\sup\{r|\underline{a}^r\leq t\leq  \overline{a}^r\}$ is
a fuzzy number with parametrization given by $ [\tilde{a}]^{r}=[\underline{a} ^r,\overline{a}^r]$,
then functions $\underline{a}^r$ and $\overline{a}^r$ satisfy conditions (i)--(v).
\end{lemma}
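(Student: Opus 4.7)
The plan is to verify the lemma's two implications in turn, treating it as the classical Negoita--Ralescu / Goetschel--Voxman characterization of a fuzzy number by its two endpoint functions. For the forward direction, with $\underline{a}^r$ and $\overline{a}^r$ satisfying (i)--(v), I would define $\tilde{a}(t):=\sup\{r : \underline{a}^r \leq t \leq \overline{a}^r\}$ (with the supremum of the empty set taken to be $0$) and check each of the four defining properties of a fuzzy number. Normality is immediate from (iii): any $t\in[\underline{a}^1,\overline{a}^1]$ satisfies $\tilde{a}(t)\geq 1$. Fuzzy convexity reduces to the observation that for fixed $r$ the set $\{t : \underline{a}^r\leq t\leq \overline{a}^r\}$ is an interval; if $\tilde{a}(x),\tilde{a}(y)\geq\alpha$ then monotonicity (i)--(ii) forces $x,y\in[\underline{a}^r,\overline{a}^r]$ for every $r<\alpha$, so any convex combination also lies there, yielding $\tilde{a}(\lambda x+(1-\lambda)y)\geq\alpha$. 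Compactness of the support follows from the boundedness in (i)--(ii), which confines $\mathrm{supp}(\tilde{a})$ to $[\underline{a}^0,\overline{a}^0]$.

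The most delicate point in the forward direction is upper semicontinuity together with the identification $[\tilde{a}]^r=[\underline{a}^r,\overline{a}^r]$. For $r\in(0,1]$ I would argue that $\tilde{a}(t)\geq r$ holds exactly when $\underline{a}^s\leq t\leq \overline{a}^s$ for all $s<r$: the ``only if'' direction is clear from the definition of supremum combined with monotonicity, while the ``if'' direction uses (iv) to pass to the limit $s\to r^-$ and conclude $\underline{a}^r\leq t\leq \overline{a}^r$. Thus $[\tilde{a}]^r$ coincides with the closed interval $[\underline{a}^r,\overline{a}^r]$, which is closed, giving upper semicontinuity of $\tilde{a}$ at every level $r>0$; the case $r=0$ is handled analogously using (v).

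For the converse, starting from a fuzzy number $\tilde{a}$ I would set $\underline{a}^r:=\inf[\tilde{a}]^r$ and $\overline{a}^r:=\sup[\tilde{a}]^r$, using fuzzy convexity plus upper semicontinuity to know that each $[\tilde{a}]^r$ is a closed interval. Properties (i) and (ii) follow from the nesting $[\tilde{a}]^s\subseteq[\tilde{a}]^r$ whenever $r\leq s$; boundedness comes from compactness of the support; and (iii) is a restatement of normality. For (iv), upper semicontinuity gives $[\tilde{a}]^k=\bigcap_{r<k}[\tilde{a}]^r$, from which the monotone-limit argument yields $\underline{a}^k=\lim_{r\to k^-}\underline{a}^r$ and symmetrically for $\overline{a}^k$; (v) follows from $[\tilde{a}]^0=\mathrm{cl}(\mathrm{supp}(\tilde{a}))$ together with the same monotone-limit principle. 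The main obstacle is the equality $[\tilde{a}]^r=[\underline{a}^r,\overline{a}^r]$ in the forward direction: once it is pinned down, upper semicontinuity, the correct $r$-level parametrization, and compatibility with the converse all come out together, so the whole argument hinges on the careful use of the one-sided continuity hypothesis (iv) to close the interval at its upper endpoint in $r$.
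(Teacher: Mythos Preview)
Your argument is essentially the standard Goetschel--Voxman/Negoita--Ralescu proof and is correct in outline; the one place that could use a line more care is the identification $[\tilde a]^0=[\underline a^0,\overline a^0]$, where you should explicitly use (v) to show that every interior point of $[\underline a^0,\overline a^0]$ lies in some $[\underline a^r,\overline a^r]$ with $r>0$ and hence in $\mathrm{supp}(\tilde a)$, with the endpoints then recovered by closure.

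There is, however, nothing to compare your proof against: the paper does not prove Lemma~\ref{lem1} at all. It is stated as a quotation of known results (Theorem~1.1 of \cite{MR0825618} and Lemma~2.1 of \cite{MR2609258}) and is used only as background in the preliminaries, with no proof or sketch given. So your write-up supplies an argument the paper deliberately omits by citation rather than offering an alternative to anything in the text.
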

For $\tilde{a}, \tilde{b}\in\mathcal{F}_R$ and $\lambda \in\mathbb{R}$,
the sum $\tilde{a}+\tilde{b}$ and the product $\lambda \cdot \tilde{a}$
are defined by $[\tilde{a}+\tilde{b}]^{r}=[\tilde{a}]^{r}+[\tilde{b}]^{r}$
and $[\lambda \cdotp \tilde{a}]^r=\lambda[\tilde{a}]^r$ for all $r\in[0,1]$,
where $[\tilde{a}]^{r}+[\tilde{b}]^{r}$ means the usual addition of two intervals
(subsets) of $\mathbb{R}$ and $\lambda[\tilde{a}]^r$ means the usual product
between a scalar and a subset of $\mathbb{R}$. The product $\tilde{a} \odot \tilde{b}$
of fuzzy numbers $\tilde{a}$ and $\tilde{b}$, is defined by
$[\tilde{a}\odot\tilde{b}]^r=[\min\{ \underline{a}^r \underline{b}^r,
\underline{a}^r \overline{b}^r, \overline{a}^r \underline{b}^r,
\overline{a}^r \overline{b}^r \},
\max \{\underline{a}^r \underline{b}^r, \underline{a}^r \overline{b}^r,
\overline{a}^r \underline{b}^r, \overline{a}^r \overline{b}^r \}]$.
The metric structure is given by the Hausdorff distance
$D:\mathcal{F}_R \times \mathcal{F}_R \rightarrow \mathbb{R}_{+}\cup\{0\}$,
$D(\tilde{a},\tilde{b})=\sup_{r\in[0,1]}
\max\{|\underline{a}^r-\underline{b}^r|,|\overline{a}^r-\overline{b}^r|\}$.\par
A triangular fuzzy number, denoted by $\tilde{a}=(x,y,z)$ where $x\leq y \leq z,$  has $r$-level set $[yr+x(1-r),yr+z(1-r)],~r\in[0,1].$
\begin{definition}[See \cite{Farhadinia}]
We say that $\tilde{f}:[a,b]\rightarrow \mathcal{F}_R$ is continuous
at $x\in[a,b]$, if both $\underline{f}^r(x)$ and $\overline{f}^r(x)$
are continuous functions of $x\in[a,b]$ for all $r \in [0,1]$.
\end{definition}
\begin{definition}[See \cite{Hoa}]
The generalized Hukuhara difference of two fuzzy numbers
$\tilde{a},\tilde{b}\in\mathcal{F}_R$ ($gH$-difference for short) is defined as follows:
\begin{equation*}
\tilde{a}\ominus_{gH}\tilde{b}=\tilde{c}
\Leftrightarrow
\tilde{a}=\tilde{b}+\tilde{c}
\text{ or }
\tilde{b}=\tilde{a}+(-1)\tilde{c}.
\end{equation*}
\end{definition}
If $\tilde{c}= \tilde{a}\ominus_{gH}\tilde{b}$ exists as a fuzzy number, then
its level cuts $[\underline{c}^r, \overline{c}^r]$ are obtained by
$\underline{c}^r=\min\{ \underline{a}^r - \underline{b}^r,
\overline{a}^r - \overline{b}^r\}$ and $\overline{c}^r
=\max\{ \underline{a}^r - \underline{b}^r,
\overline{a}^r - \overline{b}^r\}$ for all $r\in[0,1]$.

\begin{definition}[See \cite{Hoa}]
\label{def02}
Let $x\in(a,b)$ and $h$ be such that $x+h\in(a,b)$.
The generalized Hukuhara derivative of a fuzzy--valued function
$\tilde{f}:(a,b) \rightarrow \mathcal{F}_R$ at $x$ is defined by
\begin{equation}
\label{G2}
\mathcal{D}_{gH}\tilde{f}(x)=\lim_{{h\to 0}}\frac{\tilde{f}(x+h)\ominus_{gH} \tilde{f}(x)}{h}.
\end{equation}
If $\mathcal{D}_{gH}\tilde{f}(x) \in \mathcal{F}_R$ satisfying \eqref{G2} exists,
then we say that $\tilde{f}$ is generalized Hukuhara differentiable
($gH$-differentiable for short) at $x$. Also, we say that $\tilde{f}$ is
$[(1)-gH]$-differentiable at $x$ (denoted by $\mathcal{D}_{1,gH} \tilde{f}$)
if $[\mathcal{D}_{gH}\tilde{f}(x)]^r = [\dot{\underline{f}}^r(x), \dot{\overline{f}}^r(x)]$,
and that $\tilde{f}$ is $[(2)-gH]$-differentiable at $x$ (denoted by $\mathcal{D}_{2,gH} \tilde{f}$)
if $[\mathcal{D}_{gH}\tilde{f}(x)]^r = [\dot{\overline{f}}^r(x), \dot{\underline{f}}^r(x)]$, $r\in[0,1]$.
\end{definition}
If the fuzzy function $\tilde{f}(x)$ is continuous in the metric $D$,
then its definite integral exists. Furthermore,
\begin{equation*}
\left( \underline{\int_{a}^{b} \tilde{f}(x)dx} \right)^r=\int_{a}^{b} \underline{f}^r (x)dx,
\quad \left( \overline{\int_{a}^{b} \tilde{f}(x)dx} \right)^r=\int_{a}^{b} \overline{f}^r (x)dx.
\end{equation*}
\begin{definition}[See \cite{Farhadinia}]
\label{def1}
Let $\tilde{a},\tilde{b}\in\mathcal{F}_R$. We write $\tilde{a}\preceq\tilde{b}$,
if $\underline{a}^r \leq \underline{b}^r $ and $\overline{a}^r \leq \overline{b}^r$
for all $r \in [0,1]$. We also write $\tilde{a}\prec\tilde{b}$,
if $\tilde{a}\preceq\tilde{b}$ and there exists an $r'\in [0,1]$
so that $\underline{a}^{r'}  < \underline{b}^{r'}$ and
$\overline{a}^{r'}  < \overline{b}^{r'}$.  Moreover,
$\tilde{a}\approx\tilde{b}$ if $\tilde{a}\preceq\tilde{b}$
and $\tilde{a}\succeq\tilde{b}$, that is,
$[\tilde{a}]^r=[\tilde{b}]^r$ for all $r \in[0,1]$.
\end{definition}
We say that $\tilde{a},\tilde{b}\in\mathcal{F}_R$
are comparable if either $\tilde{a}\preceq\tilde{b}$
or $\tilde{a}\succeq\tilde{b}$; and noncomparable otherwise.

\section{Fuzzy Noether's theorem}\label{2}
There exist several ways to prove the classical theorem of Emmy Noether. In this section we
extend one of those proofs \cite{Torres2}. The proof is done in two steps: we begin by proving Noether's theorem  without transformation of the independent variable;
then  we obtain Noether's theorem in its general
form.\par
In 2010 \cite{Farhadinia}, a formulation of the Euler--Lagrange equations was given for problems of the
fuzzy calculus of variations. In this section we prove a Noether's theorem
for the fuzzy Euler--Lagrange extremals. Along the work, we denote by $\partial_i \underline{L}^r~(\partial_i \overline{L}^r)$ the partial derivative of $ \underline{L}^r ~(\overline{L}^r)$ with respect to its $i$th argument.\par
 The fundamental functional of the fuzzy calculus of variations is defined as follows:
\begin{equation}\label{z1}
\tilde{I}[\tilde{q}(.)]=\int_{a}^{b} \tilde{L} (x,\tilde{q}(x),\dot{\tilde{q}}(x))dx\longrightarrow \min,
\end{equation}
under the boundary conditions $\tilde{q}(a)=\tilde{q}_a$ and $\tilde{q}(b)=\tilde{q}_b$.
The $r$--level set of Lagrangian $\tilde{L} : [a,b]\times\mathcal{F}_R \times\mathcal{F}_R  \rightarrow \mathcal{F}_R$  is
\begin{multline*}
\left[\tilde{L}(x,\tilde{q}(x),\dot{\tilde{q}}(x))\right]^r\\
=\left[\underline{L}^r\left(x,\underline{q}^r(x),\overline{q}^r(x),\dot{\underline{q}}^r(x),\dot{\overline{q}}^r(x)\right),\overline{L}^r\left(x,\underline{q}^r(x),\overline{q}^r(x),\dot{\underline{q}}^r(x),\dot{\overline{q}}^r(x)\right)\right].
\end{multline*}
The Lagrangian $\underline{L}^r$ and $\overline{L}^r$ are  assumed to be $C^2$--functions with respect to all its arguments.
\begin{theorem}[See\cite{Farhadinia}]\label{t1}
If $\tilde{q}(x)$ is a minimizer of problem (\ref{z1}), then it satisfies the fuzzy Euler--Lagrange equations:
\begin{equation}\label{q7}
\begin{gathered}
\partial_2\underline{L}^r\left(x,\underline{q}^r(x),\overline{q}^r(x),\dot{\underline{q}}^r(x),\dot{\overline{q}}^r(x)\right)-\frac{d}{dx}\partial_4\underline{L}^r\left(x,\underline{q}^r(x),\overline{q}^r(x),\dot{\underline{q}}^r(x),\dot{\overline{q}}^r(x)\right)=0,\\
\partial_3\underline{L}^r\left(x,\underline{q}^r(x),\overline{q}^r(x),\dot{\underline{q}}^r(x),\dot{\overline{q}}^r(x)\right)-\frac{d}{dx}\partial_5\underline{L}^r\left(x,\underline{q}^r(x),\overline{q}^r(x),\dot{\underline{q}}^r(x),\dot{\overline{q}}^r(x)\right)=0,\\
\partial_2\overline{L}^r\left(x,\underline{q}^r(x),\overline{q}^r(x),\dot{\underline{q}}^r(x),\dot{\overline{q}}^r(x)\right)-\frac{d}{dx}\partial_4\overline{L}^r\left(x,\underline{q}^r(x),\overline{q}^r(x),\dot{\underline{q}}^r(x),\dot{\overline{q}}^r(x)\right)=0,\\
\partial_3\overline{L}^r\left(x,\underline{q}^r(x),\overline{q}^r(x),\dot{\underline{q}}^r(x),\dot{\overline{q}}^r(x)\right)-\frac{d}{dx}\partial_5\overline{L}^r\left(x,\underline{q}^r(x),\overline{q}^r(x),\dot{\underline{q}}^r(x),\dot{\overline{q}}^r(x)\right)=0,
\end{gathered}
\end{equation}
for all $r\in[0,1].$
\end{theorem}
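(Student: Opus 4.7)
The plan is to adapt the classical derivation of the Euler--Lagrange equations to the $r$-level parametrization furnished by Lemma~\ref{lem1}. Suppose $\tilde{q}$ minimizes~(\ref{z1}). I would first introduce a one-parameter family of admissible fuzzy variations $\tilde{q}_\varepsilon(x) = \tilde{q}(x) + \varepsilon\,\tilde{\eta}(x)$ with $\tilde{\eta}(a)=\tilde{\eta}(b)=\tilde{0}$. Passing to $r$-level sets and using the rules for fuzzy addition and scalar multiplication, this is equivalent, at each $r\in[0,1]$, to simultaneous perturbations
\begin{equation*}
\underline{q}^r \mapsto \underline{q}^r+\varepsilon\,\underline{\eta}^r, \qquad \overline{q}^r \mapsto \overline{q}^r+\varepsilon\,\overline{\eta}^r,
\end{equation*}
of the endpoint functions of the level sets, with $\underline{\eta}^r$ and $\overline{\eta}^r$ vanishing at $x=a$ and $x=b$.

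Next I would convert the minimality of $\tilde{I}[\tilde{q}]$ under the partial order of Definition~\ref{def1} into a classical stationarity condition. For each $r\in[0,1]$ set
\begin{equation*}
\underline{\Phi}^r(\varepsilon)=\int_a^b \underline{L}^r\bigl(x,\underline{q}^r+\varepsilon\underline{\eta}^r,\overline{q}^r+\varepsilon\overline{\eta}^r,\dot{\underline{q}}^r+\varepsilon\dot{\underline{\eta}}^r,\dot{\overline{q}}^r+\varepsilon\dot{\overline{\eta}}^r\bigr)\,dx,
\end{equation*}
and define $\overline{\Phi}^r(\varepsilon)$ analogously with $\overline{L}^r$. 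Because $\varepsilon$ can be taken of either sign and $\tilde{q}$ is a minimizer in the sense of~$\preceq$, both $\underline{\Phi}^r$ and $\overline{\Phi}^r$ must be stationary at $\varepsilon=0$, yielding $(\underline{\Phi}^r)'(0)=(\overline{\Phi}^r)'(0)=0$ for every $r$.

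Then I would differentiate under the integral (justified by the $C^2$ hypothesis on $\underline{L}^r$ and $\overline{L}^r$), apply the chain rule, and integrate by parts on the two terms carrying $\dot{\underline{\eta}}^r$ and $\dot{\overline{\eta}}^r$; the boundary contributions vanish because of the endpoint conditions on $\tilde{\eta}$. Finally, choosing $\underline{\eta}^r$ arbitrary and $\overline{\eta}^r\equiv 0$, and then swapping the roles of the two variations, the fundamental lemma of the calculus of variations forces each of the four integrands to vanish identically, which is precisely the system~(\ref{q7}).

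The delicate step is the independence of the variations in $\underline{q}^r$ and $\overline{q}^r$: a priori, only those pairs $(\underline{\eta}^r,\overline{\eta}^r)$ for which $\underline{q}^r+\varepsilon\underline{\eta}^r$ and $\overline{q}^r+\varepsilon\overline{\eta}^r$ continue to satisfy the monotonicity and ordering conditions (i)--(v) of Lemma~\ref{lem1} arise from a genuine fuzzy-valued $\tilde{\eta}$. I would overcome this by localizing: pick $\tilde{\eta}$ supported in a small neighborhood of an arbitrary interior point $x_0\in(a,b)$ and note that the strict inequalities guaranteed by Lemma~\ref{lem1} are preserved under $C^0$-small perturbations. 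The resulting class of admissible ``bump'' variations is rich enough to activate each Euler--Lagrange expression pointwise in $x_0$, which completes the argument.
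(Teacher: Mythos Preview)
The paper does not supply its own proof of this theorem; it is quoted from \cite{Farhadinia}. Your derivation is the standard one and matches exactly the argument the authors give for the delayed analogue, Theorem~\ref{NT1}: introduce a fuzzy variation $\tilde{h}$, differentiate the lower and upper $r$-level functionals at $\varepsilon=0$, integrate by parts, and conclude via arbitrariness of the variation. In that proof the authors simply write ``Combining these relations \ldots\ we prove the result'' without addressing the decoupling of $\underline{h}^r$ from $\overline{h}^r$, so your final paragraph actually goes beyond what the paper does.

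One caution about that paragraph: if $\tilde{\eta}$ is itself required to be a fuzzy number, the constraint $\underline{\eta}^r\le\overline{\eta}^r$ holds regardless of $\tilde{q}$ and of the support of $\tilde{\eta}$, so localization alone does not let you set $\overline{\eta}^r\equiv 0$ while keeping $\underline{\eta}^r$ an arbitrary (say positive) bump. What your argument really needs is to relax the requirement that $\tilde{\eta}$ be fuzzy and demand only that the perturbed curve $\tilde{q}+\varepsilon\tilde{\eta}$ remain admissible; then strictness of the inequalities in Lemma~\ref{lem1} for $\tilde{q}$ does give enough room, for small $\varepsilon$, to vary $\underline{\eta}^r$ and $\overline{\eta}^r$ independently. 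The literature cited here typically sidesteps this point entirely by treating $\underline{q}^r$ and $\overline{q}^r$ as independent real unknowns from the outset.
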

\begin{definition}[Invariance  without transforming time]\label{q26}
Functional $(\ref{z1})$ is said to be invariant
under an $\epsilon$--parameter group of infinitesimal transformations
\begin{equation}\label{q1}
\tilde{\hat{q}}(x)=\tilde{q}(x)+\epsilon\tilde{\zeta}(x,\tilde{q}(x))+ o(\epsilon)
\end{equation}
if  and only if,
\begin{equation}\label{q2}
\int_{t_a}^{t_b} \tilde{L} \left(x,\tilde{q}(x),\dot{\tilde{q}}(x)\right)dx= \int_{t_a}^{t_b} \tilde{L} \left(x,\tilde{\hat{q}}(x),\dot{\tilde{\hat{q}}}(x)\right)dx,
\end{equation}
for any subinterval $[t_a,t_b]\subseteq [a,b]$ and $\epsilon>0.$
\end{definition}
 Follows from the definition of partial ordering given in Definition \ref{def1}, the
inequality (\ref{q2}) holds if and only if
\begin{equation}\label{q3}
\int_{t_a}^{t_b} \underline{L}^r \left(x,\underline{q}^r(x),\overline{q}^r(x),\dot{\underline{q}}^r(x),\dot{\overline{q}}^r(x)\right)dx= \int_{t_a}^{t_b} \underline{L}^r \left(x,\underline{\hat{q}}^r(x),\overline{\hat{q}}^r(x),\dot{\underline{\hat{q}}}^r(x),\dot{\overline{\hat{q}}}^r(x)\right)dx,
\end{equation}
and
\begin{equation}\label{q4}
\int_{t_a}^{t_b} \overline{L}^r \left(x,\underline{q}^r(x),\overline{q}^r(x),\dot{\underline{q}}^r(x),\dot{\overline{q}}^r(x)\right)dx= \int_{t_a}^{t_b} \overline{L}^r \left(x,\underline{\hat{q}}^r(x),\overline{\hat{q}}^r(x),\dot{\underline{\hat{q}}}^r(x),\dot{\overline{\hat{q}}}^r(x)\right)dx,
\end{equation}
for all $r\in[0,1],$
where
\begin{equation}\label{q5}
\begin{gathered}
\underline{\hat{q}}^r(x)=\underline{q}^r(x)+\epsilon\underline{\zeta}^r(x,\underline{q}^r,\overline{q}^r)+ o(\epsilon),\\
\overline{\hat{q}}^r(x)=\overline{q}^r(x)+\epsilon\overline{\zeta}^r(x,\underline{q}^r,\overline{q}^r)+ o(\epsilon).
\end{gathered}
\end{equation}
\begin{theorem}[Necessary condition of invariance]\label{t2} If functional \eqref{z1} is invariant under transformations \eqref{q1}, then
\begin{equation}\label{q13}
\partial_2 \underline{L}^r.\underline{\zeta}^r+\partial_3 \underline{L}^r.\overline{\zeta}^r+\partial_4 \underline{L}^r.\dot{\underline{\zeta}}^r+\partial_5\underline{L}^r.\dot{\overline{\zeta}}^r=0,
\end{equation}
\begin{equation}\label{q14}
\partial_2 \overline{L}^r.\underline{\zeta}^r+\partial_3 \overline{L}^r.\overline{\zeta}^r+\partial_4 \overline{L}^r.\dot{\underline{\zeta}}^r+\partial_5\overline{L}^r.\dot{\overline{\zeta}}^r=0.
\end{equation}
for all $r\in[0,1]$.
\end{theorem}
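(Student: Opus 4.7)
The plan is to obtain \eqref{q13}--\eqref{q14} from the invariance identities \eqref{q3}--\eqref{q4} by the standard device of differentiating with respect to the group parameter $\epsilon$ at $\epsilon=0$, and then invoking the arbitrariness of the subinterval $[t_a,t_b]$ to pass from an integrated identity to a pointwise one.

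I would first focus on \eqref{q3}. Since $\underline{L}^r$ is of class $C^2$ in all its arguments, a first-order Taylor expansion of the integrand on the right-hand side around $\epsilon=0$, using \eqref{q5} together with $\dot{\underline{\hat{q}}}^r = \dot{\underline{q}}^r + \epsilon\dot{\underline{\zeta}}^r + o(\epsilon)$ and the analogous expression for $\dot{\overline{\hat{q}}}^r$, yields
\[
\underline{L}^r\bigl(x,\underline{\hat{q}}^r,\overline{\hat{q}}^r,\dot{\underline{\hat{q}}}^r,\dot{\overline{\hat{q}}}^r\bigr)
= \underline{L}^r\bigl(x,\underline{q}^r,\overline{q}^r,\dot{\underline{q}}^r,\dot{\overline{q}}^r\bigr)
+ \epsilon\bigl(\partial_2\underline{L}^r\cdot\underline{\zeta}^r + \partial_3\underline{L}^r\cdot\overline{\zeta}^r + \partial_4\underline{L}^r\cdot\dot{\underline{\zeta}}^r + \partial_5\underline{L}^r\cdot\dot{\overline{\zeta}}^r\bigr) + o(\epsilon).
\]
Substituting into \eqref{q3}, subtracting the left-hand side, dividing by $\epsilon>0$, and letting $\epsilon\to 0^{+}$ (which is legitimate under the $C^2$ regularity, allowing the interchange of limit and integral) produces
\[
\int_{t_a}^{t_b}\bigl(\partial_2\underline{L}^r\cdot\underline{\zeta}^r + \partial_3\underline{L}^r\cdot\overline{\zeta}^r + \partial_4\underline{L}^r\cdot\dot{\underline{\zeta}}^r + \partial_5\underline{L}^r\cdot\dot{\overline{\zeta}}^r\bigr)\,dx = 0.
\]

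Because this identity is required to hold for every subinterval $[t_a,t_b]\subseteq[a,b]$, and the integrand is continuous (from the $C^2$ hypothesis on $\underline{L}^r$ together with the smoothness of the infinitesimal generators $\underline{\zeta}^r,\overline{\zeta}^r$), the fundamental lemma of the calculus of variations applies: a continuous function whose integral vanishes on every subinterval of $[a,b]$ must be identically zero. This delivers \eqref{q13}. An identical argument, starting from \eqref{q4} with $\overline{L}^r$ in place of $\underline{L}^r$, produces \eqref{q14}.

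The only delicate point I anticipate is the justification of passing $\lim_{\epsilon\to 0^{+}}$ inside the integral, where the $C^2$ assumption on the Lagrangians supplies the uniform bounds on the first-order partial derivatives needed to dominate the remainder and ensure that the $o(\epsilon)$ term, once divided by $\epsilon$, vanishes uniformly on $[t_a,t_b]$. Everything else reduces to classical variational reasoning carried out separately on the lower and upper endpoint functions of the $r$-level representation.
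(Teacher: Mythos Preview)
Your proposal is correct and follows essentially the same approach as the paper: the paper first passes from the integral identities \eqref{q3}--\eqref{q4} to the pointwise identities \eqref{q11}--\eqref{q12} (using arbitrariness of $[t_a,t_b]$) and then differentiates at $\epsilon=0$, whereas you differentiate first and then localize, but the two ingredients and the resulting argument are the same. Your version is in fact more carefully written, since the paper's proof leaves the passage from integral to pointwise equality implicit.
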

\begin{proof}
Eqs. \eqref{q3} and \eqref{q4} are equivalent to
\begin{multline}\label{q11}
\underline{L}^r\left(x,\underline{q}^r(x),\overline{q}^r(x),\dot{\underline{q}}^r(x),\dot{\overline{q}}^r(x)\right)=  \underline{L}^r\left(x,\underline{q}^r(x)+\epsilon\underline{\zeta}^r(x)+o(\epsilon),\right.\\\left.
\overline{q}^r(x)+\epsilon\overline{\zeta}^r+o(\epsilon),\dot{\underline{q}}^r(x)+\epsilon\dot{\underline{\zeta}}^r+o(\epsilon),\dot{\overline{q}}^r(x)+
\epsilon\dot{\overline{\zeta}}^r+o(\epsilon)\right)
\end{multline}
and
\begin{multline}\label{q12}
\overline{L}^r\left(x,\underline{q}^r(x),\overline{q}^r(x),\dot{\underline{q}}^r(x),\dot{\overline{q}}^r(x)\right)=  \overline{L}^r\left(x,\underline{q}^r+\epsilon\underline{\zeta}^r+o(\epsilon)\right.,\\\left.\overline{q}^r(x)+\epsilon\overline{\zeta}^r+o(\epsilon),\dot{\underline{q}}^r(x)+\epsilon\dot{\underline{\zeta}}^r+o(\epsilon),\dot{\overline{q}}^r(x)+
\epsilon\dot{\overline{\zeta}}^r+o(\epsilon)\right),
\end{multline}
for all  $r\in[0,1]$. Differentiating both sides of Eqs.(\ref{q11}) and (\ref{q12}) with respect to $\epsilon$ then substituting $\epsilon=0$, we obtain
(\ref{q13}) and (\ref{q14}).
\end{proof}
\begin{definition}[Conserved quantity]\label{def0} Quantity $\tilde{C}(x,\tilde{q}(x),\dot{\tilde{q}}(x))$ is said to be conserved if, and only if,
\begin{equation}
\frac{d}{dx}\underline{C}^r\left(x,\underline{q}^r(x),\overline{q}^r(x),\dot{\underline{q}}^r(x),\dot{\overline{q}}^r(x)\right)=\frac{d}{dx}\overline{C}^r\left(x,\underline{q}^r(x),\overline{q}^r(x),\dot{\underline{q}}^r(x),\dot{\overline{q}}^r(x)\right)=0,
\end{equation}
 along all the solutions of the Euler--Lagrange equations (\ref{q7}) and  for all $r\in[0,1]$.
\end{definition}
\begin{theorem}
[Noether's theorem without transforming time] \label{q27}
If functional \eqref{z1} is invariant under
the one--parameter group of transformations \eqref{q1}, then $\tilde{C}(x,\tilde{q}(x),\dot{\tilde{q}}(x))$  is conserved where the lower and upper bound of  $\tilde{C}$ are
\begin{multline}
\underline{C}^r\left(x,\underline{q}^r(x),\overline{q}^r(x),\dot{\underline{q}}^r(x),\dot{\overline{q}}^r(x)\right)=\partial_4 \underline{L}^r\left(x,\underline{q}^r(x),\overline{q}^r(x),\dot{\underline{q}}^r(x),\dot{\overline{q}}^r(x)\right)\\
.\underline{\zeta}^r(x,\underline{q}^r(x),\overline{q}^r(x))+\partial_5 \underline{L}^r\left(x,\underline{q}^r(x),\overline{q}^r(x),\dot{\underline{q}}^r(x),\dot{\overline{q}}^r(x)\right).\overline{\zeta}^r(x,\underline{q}^r(x),\overline{q}^r(x))
\end{multline}
and
\begin{multline}
\overline{C}^r\left(x,\underline{q}^r(x),\overline{q}^r(x),\dot{\underline{q}}^r(x),\dot{\overline{q}}^r(x)\right)=\partial_4 \overline{L}^r\left(x,\underline{q}^r(x),\overline{q}^r(x),\dot{\underline{q}}^r(x),\dot{\overline{q}}^r(x)\right)\\.\underline{\zeta}^r(x,\underline{q}^r(x),\overline{q}^r(x))+\partial_5 \overline{L}^r\left(x,\underline{q}^r(x),\overline{q}^r(x),\dot{\underline{q}}^r(x),\dot{\overline{q}}^r(x)\right).\overline{\zeta}^r(x,\underline{q}^r(x),\overline{q}^r(x))
\end{multline}
 for all $r\in[0,1]$.
\end{theorem}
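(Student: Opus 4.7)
The plan is to combine the necessary invariance conditions \eqref{q13}--\eqref{q14} established in Theorem \ref{t2} with the fuzzy Euler--Lagrange equations \eqref{q7} from Theorem \ref{t1}, and then recognize the resulting expression as a total derivative through the Leibniz product rule. Since the statement decouples into two independent identities (one for $\underline{L}^r$, one for $\overline{L}^r$) with identical structure, I would prove only the lower--bound case in detail and invoke symmetry for the upper--bound case.

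First, I would fix $r \in [0,1]$ and consider a solution of the Euler--Lagrange system. From \eqref{q7} we have $\partial_2 \underline{L}^r = \tfrac{d}{dx}\partial_4 \underline{L}^r$ and $\partial_3 \underline{L}^r = \tfrac{d}{dx}\partial_5 \underline{L}^r$, with the arguments suppressed for brevity. Substituting these two identities into \eqref{q13} would rewrite the invariance condition as
\begin{equation*}
\frac{d}{dx}\bigl(\partial_4 \underline{L}^r\bigr)\cdot \underline{\zeta}^r + \frac{d}{dx}\bigl(\partial_5 \underline{L}^r\bigr)\cdot \overline{\zeta}^r + \partial_4 \underline{L}^r \cdot \dot{\underline{\zeta}}^r + \partial_5 \underline{L}^r \cdot \dot{\overline{\zeta}}^r = 0.
\end{equation*}

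Second, I would recognize the left--hand side as the expansion, via the ordinary Leibniz rule, of the total derivative
\begin{equation*}
\frac{d}{dx}\bigl[\partial_4 \underline{L}^r \cdot \underline{\zeta}^r + \partial_5 \underline{L}^r \cdot \overline{\zeta}^r\bigr] = 0,
\end{equation*}
which is exactly the statement that $\underline{C}^r$ is constant along extremals. Repeating the identical manipulation starting from \eqref{q14} and the corresponding pair of Euler--Lagrange equations for $\overline{L}^r$ yields $\tfrac{d}{dx}\overline{C}^r = 0$. Since the argument is valid for every $r \in [0,1]$, conservation holds in the sense of Definition \ref{def0}.

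I do not anticipate a genuine obstacle here: once Theorems \ref{t1} and \ref{t2} are in hand, this is a bookkeeping proof, componentwise in $r$ and separately on each of the two interval endpoints. The only point requiring care is notational: the partial derivatives $\partial_4 \underline{L}^r$ and $\partial_5 \underline{L}^r$ are themselves composite functions of $x$ (through $\underline{q}^r, \overline{q}^r, \dot{\underline{q}}^r, \dot{\overline{q}}^r$), so when applying $\tfrac{d}{dx}$ one must be sure that the chain--rule expansion of $\tfrac{d}{dx}\partial_4\underline{L}^r$ coming from the Euler--Lagrange equation matches the derivative one would form by the Leibniz rule; this is automatic because both come from ordinary (crisp) differentiation of the $C^2$ Lagrangians $\underline{L}^r$ and $\overline{L}^r$. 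No fuzzy--specific subtlety from $gH$--differentiability enters, since the whole argument is carried out at the level of the level--set endpoints, where standard real--variable calculus applies.
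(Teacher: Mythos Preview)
Your proposal is correct and follows essentially the same approach as the paper: both combine the Euler--Lagrange equations \eqref{q7} with the invariance conditions \eqref{q13}--\eqref{q14} and use the Leibniz product rule to identify the result as a total derivative. The only cosmetic difference is direction: the paper starts by expanding $\tfrac{d}{dx}\underline{C}^r$ and reduces it to \eqref{q13}, whereas you start from \eqref{q13} and collapse it into $\tfrac{d}{dx}\underline{C}^r$.
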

\begin{proof}
Using the  Euler--Lagrange equations (\ref{q7}) and the necessary conditions of invariance (\ref{q13}) and (\ref{q14}),
we obtain:
\begin{equation}\label{q17}
\begin{gathered}
\frac{d}{dx}
\left[\partial_4 \underline{L}^r\left(x,\underline{q}^r,\overline{q}^r,\dot{\underline{q}}^r,\dot{\overline{q}}^r\right).\underline{\zeta}^r(x,\underline{q}^r,\overline{q}^r)
+\partial_5\underline{L}^r\left(x,\underline{q}^r,\overline{q}^r,\dot{\underline{q}}^r,\dot{\overline{q}}^r\right).
\overline{\zeta}^r(x,\underline{q}^r,\overline{q}^r)\right]\\
=\frac{d}{dx}\partial_4 \underline{L}^r\left(x,\underline{q}^r,\overline{q}^r,\dot{\underline{q}}^r,\dot{\overline{q}}^r\right).\underline{\zeta}^r(x,\underline{q}^r,\overline{q}^r)
+\partial_4 \underline{L}^r\left(x,\underline{q}^r,\overline{q}^r,\dot{\underline{q}}^r,
\dot{\overline{q}}^r\right).\dot{\underline{\zeta}}^r(x,\underline{q}^r,\overline{q}^r)\\
+\frac{d}{dx}\partial_5 \underline{L}^r\left(x,\underline{q}^r,\overline{q}^r,\dot{\underline{q}}^r,
\dot{\overline{q}}^r\right).\overline{\zeta}^r(x,\underline{q}^r,\overline{q}^r)
+\partial_5 \underline{L}^r\left(x,\underline{q}^r,\overline{q}^r,
\dot{\underline{q}}^r,\dot{\overline{q}}^r\right).\dot{\overline{\zeta}}^r(x,\underline{q}^r,\overline{q}^r)\\
=\partial_2 \underline{L}^r\left(x,\underline{q}^r,\overline{q}^r,
\dot{\underline{q}}^r,\dot{\overline{q}}^r\right).\underline{\zeta}^r(x,\underline{q}^r,\overline{q}^r)
+\partial_3 \underline{L}^r\left(x,\underline{q}^r,\overline{q}^r,
\dot{\underline{q}}^r,\dot{\overline{q}}^r\right).\overline{\zeta}^r(x,\underline{q}^r,\overline{q}^r)\\
+\partial_4 \underline{L}\left(x,\underline{q}^r,\overline{q}^r,
\dot{\underline{q}}^r,\dot{\overline{q}}^r\right).\dot{\underline{\zeta}}^r(x,\underline{q}^r,\overline{q}^r)
+\partial_5 \underline{L}\left(x,\underline{q}^r,\overline{q}^r,
\dot{\underline{q}}^r,\dot{\overline{q}}^r\right).\dot{\overline{\zeta}}^r(x,\underline{q}^r,\overline{q}^r)=0.
\end{gathered}
\end{equation}
Computing similar to those in (\ref{q17}), one can easily verify
\begin{equation}\label{q16}
\frac{d}{dx}\left[\partial_4 \overline{L}^r\left(x,\underline{q}^r,\overline{q}^r,\dot{\underline{q}}^r,\dot{\overline{q}}^r\right).\underline{\zeta}^r(x,\underline{q}^r,\overline{q}^r)+\partial_5 \overline{L}^r\left(x,\underline{q}^r,\overline{q}^r,\dot{\underline{q}}^r,\dot{\overline{q}}^r\right).\overline{\zeta}^r(x,\underline{q}^r,\overline{q}^r)\right]=0.
\end{equation}
\end{proof}
\begin{definition}[Invariance of (\ref{z1})]\label{q18}
 Functional $(\ref{z1})$ is said to be invariant under the one-parameter
group of infinitesimal transformations
\begin{equation}
\begin{gathered}
\hat{x}=x+\epsilon\tau(x,\tilde{q})+o(\epsilon),\\
\tilde{\hat{q}}(x)=\tilde{q}(x)+\epsilon\tilde{\zeta}(x,\tilde{q})+o(\epsilon)
\end{gathered}
\end{equation}
if, and only if,
\begin{equation}\label{q20}
\begin{gathered}
\int_{t_a}^{t_b} \underline{L}^r \left(x,\underline{q}^r(x),\overline{q}^r(x),\dot{\underline{q}}^r(x),\dot{\overline{q}}^r(x)\right)dx= \int_{\hat{x}(t_a)}^{\hat{x}(t_b)} \underline{L}^r \left(\hat{x},\underline{\hat{q}}^r(\hat{x}),\overline{\hat{q}}^r(\hat{x}),\dot{\underline{\hat{q}}}^r(\hat{x}),\dot{\overline{\hat{q}}}^r(\hat{x})\right)d\hat{x},\\
\int_{t_a}^{t_b} \overline{L}^r \left(x,\underline{q}^r(x),\overline{q}^r(x),\dot{\underline{q}}^r(x),\dot{\overline{q}}^r(x)\right)dx= \int_{\hat{x}(t_a)}^{\hat{x}(t_b)} \overline{L}^r \left(\hat{x},\underline{\hat{q}}^r(\hat{x}),\overline{\hat{q}}^r(\hat{x}),\dot{\underline{\hat{q}}}^r(\hat{x}),\dot{\overline{\hat{q}}}^r(\hat{x})\right)d\hat{x},
\end{gathered}
\end{equation}
for any subinterval $[t_a,t_b]\subseteq [a,b]$ and for all $r\in[0,1]$  and $\epsilon>0.$
\end{definition}
\begin{theorem}[Noether's theorem] \label{q36}
If functional \eqref{z1} is invariant, in the sense of Definition \eqref{q18},
then $\tilde{C}(x,\tilde{q}(x),\dot{\tilde{q}}(x))$ is conserved for all $r\in[0,1]$, where the lower and upper bound of $\tilde{C}$ are
\begin{equation}\label{q32}
\begin{gathered}
\underline{C}^r\left(x,\underline{q}^r,\overline{q}^r,\dot{\underline{q}}^r,\dot{\overline{q}}^r\right)=\partial_4 \underline{L}^r\left(x,\underline{q}^r,\overline{q}^r,\dot{\underline{q}}^r,\dot{\overline{q}}^r\right).\underline{\zeta}^r(x,\underline{q}^r,\overline{q}^r)+\partial_5 \underline{L}^r\left(x,\underline{q}^r,\overline{q}^r,\dot{\underline{q}}^r,\dot{\overline{q}}^r\right)\\.\overline{\zeta}^r(x,\underline{q}^r,\overline{q}^r)
+\left[ \underline{L}^r\left(x,\underline{q}^r,\overline{q}^r,\dot{\underline{q}}^r,\dot{\overline{q}}^r\right)-\partial_4\underline{L}^r\left(x,\underline{q}^r,\overline{q}^r,\dot{\underline{q}}^r,\dot{\overline{q}}^r\right).\dot{\underline{q}}^r\right.\\\left.
-\partial_5\underline{L}^r
\left(x,\underline{q}^r,\overline{q}^r,\dot{\underline{q}}^r,\dot{\overline{q}}^r\right).\dot{\overline{q}}^r\right].\tau(x,\underline{q}^r,\overline{q}^r)
\end{gathered}
\end{equation}
and
\begin{equation}\label{q33}
\begin{gathered}
\overline{C}^r\left(x,\underline{q}^r,\overline{q}^r,\dot{\underline{q}}^r,\dot{\overline{q}}^r\right)=\partial_4 \overline{L}^r\left(x,\underline{q}^r,\overline{q}^r,\dot{\underline{q}}^r,\dot{\overline{q}}^r\right).\underline{\zeta}^r(x,\underline{q}^r,\overline{q}^r)+\partial_5 \overline{L}^r\left(x,\underline{q}^r,\overline{q}^r,\dot{\underline{q}}^r,\dot{\overline{q}}^r\right)\\.\overline{\zeta}^r(x,\underline{q}^r,\overline{q}^r)
+\left[ \overline{L}^r\left(x,\underline{q}^r,\overline{q}^r,\dot{\underline{q}}^r,\dot{\overline{q}}^r\right)-\partial_4\overline{L}^r\left(x,\underline{q}^r,\overline{q}^r,\dot{\underline{q}}^r,\dot{\overline{q}}^r\right).\dot{\underline{q}}^r\right.\\\left.
-\partial_5\overline{L}^r
\left(x,\underline{q}^r,\overline{q}^r,\dot{\underline{q}}^r,\dot{\overline{q}}^r\right).\dot{\overline{q}}^r\right].\tau(t,\underline{q}^r,\overline{q}^r)
\end{gathered}
\end{equation}
\end{theorem}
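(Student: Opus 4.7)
The plan is to reduce the statement to Theorem \ref{q27} by the classical time--reparameterization trick used in \cite{Torres2}, applied separately to the lower and upper $r$-level Lagrangians. The idea is to introduce a new independent variable $\sigma$ via a Lipschitz substitution $x=x(\sigma)$ with $x_\sigma:=dx/d\sigma>0$, and rewrite
$$\int_a^b \underline{L}^r\!\left(x,\underline{q}^r,\overline{q}^r,\dot{\underline{q}}^r,\dot{\overline{q}}^r\right)dx
=\int_{\sigma_a}^{\sigma_b}\underline{L}^r\!\left(x,\underline{q}^r,\overline{q}^r,\tfrac{\underline{q}^r_\sigma}{x_\sigma},\tfrac{\overline{q}^r_\sigma}{x_\sigma}\right) x_\sigma\,d\sigma,$$
and analogously for $\overline{L}^r$. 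This motivates the augmented $r$-level Lagrangians
$$\underline{\bar L}^r(x,\underline{q}^r,\overline{q}^r,x_\sigma,\underline{q}^r_\sigma,\overline{q}^r_\sigma):=\underline{L}^r\!\left(x,\underline{q}^r,\overline{q}^r,\tfrac{\underline{q}^r_\sigma}{x_\sigma},\tfrac{\overline{q}^r_\sigma}{x_\sigma}\right)x_\sigma,$$
and similarly $\overline{\bar L}^r$, in which $x$ is promoted to an additional dependent variable.

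Next I would reinterpret the combined infinitesimal transformation of Definition \ref{q18} as a transformation acting only on the dependent variables $(x,\tilde q)$ while leaving $\sigma$ fixed. Under this rewriting, the invariance identities \eqref{q20} become precisely the invariance identities \eqref{q3}--\eqref{q4} for the augmented Lagrangians $\underline{\bar L}^r,\overline{\bar L}^r$, with generators $(\tau,\underline{\zeta}^r,\overline{\zeta}^r)$. We are then in the hypotheses of Theorem \ref{q27} applied to the enlarged problem, which immediately delivers the conserved quantity
$$\underline{\bar C}^r=\partial_{x_\sigma}\underline{\bar L}^r\cdot\tau+\partial_{\underline q^r_\sigma}\underline{\bar L}^r\cdot\underline\zeta^r+\partial_{\overline q^r_\sigma}\underline{\bar L}^r\cdot\overline\zeta^r,$$
and analogously for $\overline{\bar C}^r$.

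The third step is a chain-rule computation of the partial derivatives of $\underline{\bar L}^r$:
$$\partial_{\underline q^r_\sigma}\underline{\bar L}^r=\partial_4\underline L^r,\qquad \partial_{\overline q^r_\sigma}\underline{\bar L}^r=\partial_5\underline L^r,\qquad \partial_{x_\sigma}\underline{\bar L}^r=\underline L^r-\partial_4\underline L^r\cdot\dot{\underline q}^r-\partial_5\underline L^r\cdot\dot{\overline q}^r.$$
Substituting these into the expression for $\underline{\bar C}^r$ and then identifying $\sigma=x$ (so that $x_\sigma=1$, $\underline q^r_\sigma=\dot{\underline q}^r$, $\overline q^r_\sigma=\dot{\overline q}^r$) reproduces exactly formula \eqref{q32}. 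The identical argument applied to $\overline L^r$ yields \eqref{q33}, and the conservation laws $\frac{d}{dx}\underline C^r=\frac{d}{dx}\overline C^r=0$ along Euler--Lagrange extremals follow from the conservation of $\underline{\bar C}^r$ and $\overline{\bar C}^r$ guaranteed by Theorem \ref{q27}.

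The main obstacle I expect is bookkeeping-level rather than conceptual: one must check that the reparameterization is compatible with the $r$-level decomposition of fuzzy numbers and with the $gH$-derivative structure from Definition \ref{def02}. Because $x_\sigma>0$, multiplication by $x_\sigma$ preserves the ordering of the endpoints used in the product definition $[\tilde a\odot\tilde b]^r$, so the augmented $r$-level functions $\underline{\bar L}^r,\overline{\bar L}^r$ correctly represent the level cuts of a well-defined fuzzy Lagrangian $\tilde{\bar L}$ to which Theorems \ref{t1}--\ref{q27} apply. Once this compatibility is verified, the remainder of the proof is routine.
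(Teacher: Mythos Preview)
Your proposal is correct and follows essentially the same route as the paper: both reduce to Theorem \ref{q27} via the time-reparameterization trick of \cite{Torres2}, promoting $x$ to a dependent variable, introducing the augmented $r$-level Lagrangians, and then computing $\partial_{x_\sigma}\underline{\bar L}^r,\partial_{\underline q^r_\sigma}\underline{\bar L}^r,\partial_{\overline q^r_\sigma}\underline{\bar L}^r$ by the chain rule to recover \eqref{q32}--\eqref{q33}. Your closing paragraph on compatibility of the reparameterization with the $r$-level/$gH$-derivative structure goes slightly beyond what the paper verifies explicitly (the paper works purely at the $r$-level without commenting on this point), but the core argument is the same.
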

\begin{proof}
Every non-autonomous problem (\ref{z1}) is equivalent to an autonomous one, considering $x$ as
a dependent variable. For that we consider a Lipschitzian one-to-one transformation
\begin{equation}
[a,b]\ni x \rightarrow \sigma \in [\sigma_a,\sigma_b]
\end{equation}
such that
\begin{align*}
\underline{I}^r\left[\underline{q}^r(.),\overline{q}^r(.)\right]&=\int_{a}^{b} \underline{L} \left(x,\underline{q}^r,\overline{q}^r,\dot{\underline{q}}^r,\dot{\overline{q}}^r\right)dx
\\&= \int_{\sigma_a}^{\sigma_b} \underline{L}^r \left(x(\sigma),\underline{q}^r(x((\sigma)),\overline{q}^r(x(\sigma)),
\frac{\frac{d\underline{q}^r(x(\sigma))}{d\sigma}}{\frac{dx(\sigma)}{d\sigma}},
\frac{\frac{d\overline{q}^r(x(\sigma))}{d\sigma}}{\frac{dx(\sigma)}{d\sigma}}\right)d\sigma\\
&= \int_{\sigma_a}^{\sigma_b} \underline{L}^r \left(x(\sigma),\underline{q}^r(x((\sigma)),\overline{q}^r(x(\sigma)),\frac{\dot{\underline{q}}^r_\sigma}{\dot{x}_\sigma},
\frac{\dot{\overline{q}}^r_\sigma}{\dot{x}_\sigma}\right) d\sigma\\
&\doteq \int_{\sigma_a}^{\sigma_b} \hat{\underline{L}^r }\left(x(\sigma),\underline{q}^r(x((\sigma)),\overline{q}^r(x(\sigma)),\dot{x}_\sigma,\dot{\underline{q}}^r_\sigma,
\dot{\overline{q}}^r_\sigma\right)d\sigma\\
&\doteq \hat{\underline{I}}^r \left[x(.),\underline{q}^r(x(.)),\overline{q}^r(x(.))\right],
\end{align*}
where $x(\sigma_a)=a,~x(\sigma_b)=b,~\dot{x}_\sigma=\frac{d x(\sigma)}{d\sigma},~\dot{
\underline{q}}^r_\sigma=\frac{d \underline{q}^r(x(\sigma))}{d\sigma}$ and $\dot{\overline{q}}^r_\sigma=\frac{d \overline{q}^r(x(\sigma))}{d\sigma}.$ By using similar arguments,
\begin{align*}
\overline{I}^r\left[\underline{q}^r(.),\overline{q}^r(.)\right]&
\\&\doteq \int_{\sigma_a}^{\sigma_b} \hat{\overline{L}^r }\left(x(\sigma),\underline{q}^r(x((\sigma)),\overline{q}^r(x(\sigma)),\dot{x}_\sigma,\dot{\underline{q}}^r_\sigma,\dot{\overline{q}}^r_\sigma\right)d\sigma
\\&\doteq \hat{\overline{I}}^r \left[x(.),\underline{q}^r(x(.)),\overline{q}^r(x(.))\right].
\end{align*}
If functional $\tilde{I}[\tilde{q}(x)]$ is invariant in the
sense of Definition \ref{q18}, then functional $ \hat{\tilde{I}}[\tilde{q}(x(\sigma))]$ is invariant in the sense of Definition \ref{q26}. Applying Theorem \ref{q27}, we obtain that
\begin{equation}\label{q30}
\frac{d}{dx}\underline{C}^r\left(x,\underline{q}^r,\overline{q}^r,\dot{x}_\sigma,\dot{\underline{q}}^r_\sigma,\dot{\overline{q}}^r_\sigma\right)=\frac{d}{dx}\left(\partial_4 \hat{\underline{L}}^r. \tau+\partial_5 \hat{\underline{L}}^r. \underline{\zeta}^r+\partial_6 \hat{\underline{L}}^r. \overline{\zeta}^r\right)=0
\end{equation}
and
\begin{equation}\label{q31}
\frac{d}{dx}\overline{C}^r\left(x,\underline{q}^r,\overline{q}^r,\dot{t}_\sigma,\dot{\underline{q}}^r_\sigma,\dot{\overline{q}}^r_\sigma\right)=\frac{d}{dx}\left(\partial_4 \hat{\overline{L}}^r. \tau+\partial_5 \hat{\overline{L}}^r. \underline{\zeta}^r+\partial_6 \hat{\overline{L}}^r. \overline{\zeta}^r\right)=0.
\end{equation}
 Since
\begin{align}\label{q28}
\partial_6 \hat{\underline{L}}^r&=\partial_5\underline{L}^r \left(x,\underline{q}^r,\overline{q}^r,\dot{\underline{q}}^r,\dot{\overline{q}}^r\right),\nonumber\\\partial_5 \hat{\underline{L}}^r&=\partial_4\underline{L}^r \left(x,\underline{q}^r,\overline{q}^r,\dot{\underline{q}}^r,\dot{\overline{q}}^r\right),\nonumber\\
\partial_4 \hat{\underline{L}}^r&=\underline{L}^r \left(x,\underline{q}^r,\overline{q}^r,\dot{\underline{q}}^r,\dot{\overline{q}}^r\right)-\partial_4\underline{L}^r \left(x,\underline{q}^r,\overline{q}^r,\dot{\underline{q}}^r,\dot{\overline{q}}^r\right).\left(\frac{\dot{\underline{q}}_\sigma^r}{\dot{x}_\sigma}\right)\nonumber\\&-\partial_5\underline{L}^r \left(x,\underline{q}^r,\overline{q}^r,\dot{\underline{q}}^r,\dot{\overline{q}}^r\right).\left(\frac{\dot{\overline{q}}_\sigma^r}{\dot{x}_\sigma}\right)\nonumber\\
&=\underline{L}^r \left(x,\underline{q}^r,\overline{q}^r,\dot{\underline{q}}^r,\dot{\overline{q}}^r\right)-\partial_4\underline{L}^r \left(x,\underline{q}^r,\overline{q}^r,\dot{\underline{q}}^r,\dot{\overline{q}}^r\right).  \dot{\underline{q}}^r-\partial_5\underline{L}^r \left(x,\underline{q}^r,\overline{q}^r,\dot{\underline{q}}^r.\dot{\overline{q}}^r\right).\dot{\overline{q}}^r\nonumber\\
\end{align}
and
\begin{align}\label{q29}
\partial_6 \hat{\overline{L}}^r&=\partial_5\overline{L}^r \left(x,\underline{q}^r,\overline{q}^r,\dot{\underline{q}}^r,\dot{\overline{q}}^r\right),\nonumber\\\partial_5 \hat{\overline{L}}^r&=\partial_4\overline{L}^r \left(x,\underline{q}^r,\overline{q}^r,\dot{\underline{q}}^r,\dot{\overline{q}}^r\right),\nonumber\\
\partial_4 \hat{\overline{L}}^r&=\overline{L}^r \left(x,\underline{q}^r,\overline{q}^r,\dot{\underline{q}}^r,\dot{\overline{q}}^r\right)-\partial_4\overline{L}^r \left(x,\underline{q}^r,\overline{q}^r,\dot{\underline{q}}^r,\dot{\overline{q}}^r\right).\left(\frac{\dot{\underline{q}}_\sigma^r}{\dot{x}_\sigma}\right)\nonumber\\&-\partial_5\overline{L}^r \left(x,\underline{q}^r,\overline{q}^r,\dot{\underline{q}}^r,\dot{\overline{q}}^r\right).\left(\frac{\dot{\overline{q}}_\sigma^r}{\dot{x}_\sigma}\right)\nonumber\\
&=\overline{L}^r \left(x,\underline{q}^r,\overline{q}^r,\dot{\underline{q}}^r,\dot{\overline{q}}^r\right)-\partial_4\overline{L}^r \left(x,\underline{q}^r,\overline{q}^r,\dot{\underline{q}}^r,\dot{\overline{q}}^r\right).  \dot{\underline{q}}^r-\partial_5\overline{L}^r \left(x,\underline{q}^r,\overline{q}^r,\dot{\underline{q}}^r.\dot{\overline{q}}^r\right).\dot{\overline{q}}^r\nonumber\\
\end{align}
substituting (\ref{q28}) and (\ref{q29}) into (\ref{q30}) and (\ref{q31}), we arrive to the intended conclusions (\ref{q32}) and (\ref{q33}).
\end{proof}


We now consider a more general case,  involving a delay in the Lagrangian function. To be more precise, consider the new problem:
\begin{equation}\label{zN1}
\tilde{I}_\tau[\tilde{q}(.)]=\int_{a}^{b} \tilde{L} (x,\tilde{q}(x),\dot{\tilde{q}}(x),\dot{\tilde{q}}(x-\tau))dx\longrightarrow \min,
\end{equation}
under the boundary conditions $\tilde{q}(x)=\tilde{\psi}(x)$, for all $x\in[a-\tau,a]$ and $\tilde{q}(b)=\tilde{q}_b$, with $0<\tau<b-a$ and
the Lagrangian $\underline{L}^r$ and $\overline{L}^r$  assumed to be $C^2$--functions with respect to all its arguments. We first prove necessary conditions in order to obtain a solution to the problem. To simplify the writing, we denote
\begin{equation}[x,\tilde q]^r:=\left(x,\underline{q}^r(x),\overline{q}^r(x),\dot{\underline{q}}^r(x),\dot{\overline{q}}^r(x),
\dot{\underline{q}}^r(x-\tau),\dot{\overline{q}}^r(x-\tau)\right).\end{equation}

\begin{theorem}\label{NT1}
If $\tilde{q}(x)$ is a solution for problem (\ref{zN1}), then it satisfies the fuzzy Euler--Lagrange equations:
\begin{equation}\label{NE1}
\begin{gathered}
\partial_2\underline{L}^r[x,\tilde q]^r-\frac{d}{dx}\left(\partial_4\underline{L}^r[x,\tilde q]^r+\partial_6\underline{L}^r[x+\tau,\tilde q]^r\right)=0,\\
\partial_3\underline{L}^r[x,\tilde q]^r-\frac{d}{dx}\left(\partial_5\underline{L}^r[x,\tilde q]^r+\partial_7\underline{L}^r[x+\tau,\tilde q]^r\right)=0,\\
\partial_2\overline{L}^r[x,\tilde q]^r-\frac{d}{dx}\left(\partial_4\overline{L}^r[x,\tilde q]^r+\partial_6\overline{L}^r[x+\tau,\tilde q]^r\right)=0,\\
\partial_3\overline{L}^r[x,\tilde q]^r-\frac{d}{dx}\left(\partial_5\overline{L}^r[x,\tilde q]^r+\partial_7\overline{L}^r[x+\tau,\tilde q]^r\right)=0,\\
\end{gathered}
\end{equation}
for all $x\in[a,b-\tau]$, and
\begin{equation}\label{NE2}
\begin{gathered}
\partial_2\underline{L}^r[x,\tilde q]^r-\frac{d}{dx}\partial_4\underline{L}^r[x,\tilde q]^r=0,\\
\partial_3\underline{L}^r[x,\tilde q]^r-\frac{d}{dx}\partial_5\underline{L}^r[x,\tilde q]^r=0,\\
\partial_2\overline{L}^r[x,\tilde q]^r-\frac{d}{dx}\partial_4\overline{L}^r[x,\tilde q]^r=0,\\
\partial_3\overline{L}^r[x,\tilde q]^r-\frac{d}{dx}\partial_5\overline{L}^r[x,\tilde q]^r=0,\\
\end{gathered}
\end{equation}
for all $x\in[b-\tau,b]$, and for all $r\in[0,1].$
\end{theorem}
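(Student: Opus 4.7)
The plan is to reduce the fuzzy problem \eqref{zN1} to a pair of classical variational problems at each $r$-level and then adapt the standard delayed-variational argument. By Definition \ref{def1} and the level-set representation of the fuzzy integral, minimizing $\tilde{I}_\tau[\tilde q]$ is equivalent to the simultaneous minimization of
\[
\underline{I}^r_\tau[\underline q^r,\overline q^r]=\int_a^b \underline{L}^r[x,\tilde q]^r\,dx,\qquad \overline{I}^r_\tau[\underline q^r,\overline q^r]=\int_a^b \overline{L}^r[x,\tilde q]^r\,dx,
\]
for every $r\in[0,1]$, exactly as in the proof of Theorem \ref{t1}. Hence it suffices to derive, for each $r$, the classical delayed Euler--Lagrange conditions in the two scalar variables $\underline q^r$ and $\overline q^r$ for both $\underline{L}^r$ and $\overline{L}^r$.

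First I would fix $r\in[0,1]$ and one of the four scalar directions (say perturb $\underline q^r$ in $\underline L^r$; the other three are strictly analogous). Consider admissible variations $\underline q^r+\epsilon\,\underline\eta^r$ with $\underline\eta^r$ of class $C^1$, satisfying $\underline\eta^r(x)=0$ for all $x\in[a-\tau,a]$ (to preserve the initial history $\tilde\psi$) and $\underline\eta^r(b)=0$ (to preserve the endpoint). Differentiating $\underline I^r_\tau$ with respect to $\epsilon$ at $\epsilon=0$ and using the $C^2$ regularity of $\underline L^r$ yields
\[
0=\int_a^b\Bigl(\partial_2\underline L^r[x,\tilde q]^r\,\underline\eta^r(x)+\partial_4\underline L^r[x,\tilde q]^r\,\dot{\underline\eta}^r(x)+\partial_6\underline L^r[x,\tilde q]^r\,\dot{\underline\eta}^r(x-\tau)\Bigr)dx.
\]

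Next I would eliminate the delay by the substitution $u=x-\tau$ in the last term:
\[
\int_a^b \partial_6\underline L^r[x,\tilde q]^r\,\dot{\underline\eta}^r(x-\tau)\,dx=\int_{a-\tau}^{b-\tau}\partial_6\underline L^r[u+\tau,\tilde q]^r\,\dot{\underline\eta}^r(u)\,du,
\]
and since $\underline\eta^r\equiv0$ on $[a-\tau,a]$, the domain reduces to $[a,b-\tau]$. Splitting the remaining integrand accordingly gives
\[
0=\int_a^{b-\tau}\!\Bigl[\partial_2\underline L^r[x,\tilde q]^r\,\underline\eta^r+\bigl(\partial_4\underline L^r[x,\tilde q]^r+\partial_6\underline L^r[x+\tau,\tilde q]^r\bigr)\dot{\underline\eta}^r\Bigr]dx+\int_{b-\tau}^b\!\Bigl[\partial_2\underline L^r[x,\tilde q]^r\,\underline\eta^r+\partial_4\underline L^r[x,\tilde q]^r\,\dot{\underline\eta}^r\Bigr]dx.
\]
Integrating by parts on each subinterval (the boundary contributions at $a$, $b-\tau$, and $b$ cancel because $\underline\eta^r(a)=\underline\eta^r(b)=0$ and because $\underline\eta^r$ may be chosen to vanish at $b-\tau$ as well when probing each subinterval separately), then invoking the fundamental lemma of the calculus of variations with variations supported first in $(a,b-\tau)$ and then in $(b-\tau,b)$, produces the first equation of \eqref{NE1} on $[a,b-\tau]$ and the first of \eqref{NE2} on $[b-\tau,b]$. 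Repeating the argument with perturbations in $\overline q^r$ inside $\underline L^r$ yields the second line of each block, and then varying $\underline q^r$ and $\overline q^r$ inside $\overline L^r$ completes the remaining two lines.

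The main obstacle is purely bookkeeping: one must keep the history constraint $\tilde\eta\equiv\tilde 0$ on $[a-\tau,a]$ carefully in mind when changing variables so that the delayed term only contributes on $[a,b-\tau]$, and one must split the test-function class into two families (those supported in $(a,b-\tau)$ versus $(b-\tau,b)$) to extract the two distinct Euler--Lagrange regimes via the fundamental lemma. No new fuzzy-specific difficulty arises beyond what is already handled in Theorem \ref{t1}, because the level-set split commutes with integration, differentiation in $\epsilon$, and the substitution $u=x-\tau$.
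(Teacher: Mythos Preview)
Your proposal is correct and follows essentially the same route as the paper's own proof: introduce admissible variations that vanish on the initial history $[a-\tau,a]$ and at $b$, differentiate the level-set functionals at $\epsilon=0$, shift the delayed term by the substitution $u=x-\tau$ so that it contributes only on $[a,b-\tau]$, integrate by parts, and apply the fundamental lemma separately on $[a,b-\tau]$ and $[b-\tau,b]$. The only cosmetic differences are that the paper perturbs the fuzzy variable $\tilde q$ by a single fuzzy variation $\epsilon\tilde h$ (so $\underline h^r$ and $\overline h^r$ appear together in the first variation and are then treated as independent), whereas you vary $\underline q^r$ and $\overline q^r$ one at a time; and the paper imposes $\tilde h(b-\tau)=\tilde 0$ up front as a convenience, while you achieve the same effect by restricting the support of test functions to each subinterval when invoking the fundamental lemma.
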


\begin{proof}
Let $\epsilon \in \mathbb{R},$ and define a family of variations of the optimal solution of type  $\tilde{q}(x)+\epsilon \tilde{h}(x)$, where  $ \tilde{h}$ satisfies the boundary conditions $\tilde{h}(x)=\tilde{0}$, for all $x\in[a-\tau,a]$, and $\tilde{h}(b)=\tilde{0}$. Also, for convenience, we will assume that
$\tilde{h}(b-\tau)=\tilde{0}$. Let
\begin{equation}
\tilde{i}_\tau(\epsilon)=\int_{a}^{b} \tilde{L}\left(x,\tilde{q}(x)+\epsilon \tilde{q}(x),\dot{\tilde{q}}(x)+\epsilon \dot{\tilde{q}}(x),
\dot{\tilde{q}}(x-\tau)+\epsilon \dot{\tilde{q}}(x-\tau)\right)dx.
\end{equation}
The lower bound and upper bound of $\tilde{i}_\tau$ are respectively
\begin{align}
\underline{i}_\tau^r(\epsilon)=\int_{a}^{b}\underline{L}^r[x,\tilde q+\epsilon \tilde h]^r dx
\end{align}
and
\begin{align}
\overline{i}_\tau^r(\epsilon)=\int_{a}^{b}\overline{L}^r[x,\tilde q+\epsilon \tilde h]^r dx.
\end{align}
Differentiating $\underline{i}_\tau^r$ at $\epsilon=0$, we obtain
\begin{equation}\label{N1}\int_a^b\left(\partial_2\underline{L}^r[x,\tilde q]^r.\underline{h}^r(x)+\partial_3\underline{L}^r[x,\tilde q]^r.\overline{h}^r(x)
+\partial_4\underline{L}^r[x,\tilde q]^r.\dot{\underline{h}}^r(x)\right.\end{equation}
$$\left. +\partial_5\underline{L}^r[x,\tilde q]^r.\dot{\overline{h}}^r(x)
 +\partial_6\underline{L}^r[x,\tilde q]^r.\dot{\underline{h}}^r(x-\tau)+\partial_7\underline{L}^r[x,\tilde q]^r.\dot{\overline{h}}^r(x-\tau)
\right)dx=0.$$
Using integration by parts, we get that
\begin{equation}\int_a^b\partial_4\underline{L}^r[x,\tilde q]^r.\dot{\underline{h}}^r(x)dx=-\int_a^b\frac{d}{dx}\partial_4\underline{L}^r[x,\tilde q]^r.\underline{h}^r(x)dx\end{equation}
and
\begin{equation}\int_a^b\partial_5\underline{L}^r[x,\tilde q]^r.\dot{\overline{h}}^r(x)dx=-\int_a^b\frac{d}{dx}\partial_5\underline{L}^r[x,\tilde q]^r.\overline{h}^r(x)dx.\end{equation}
Also, integrating by parts and using the assumptions over $\tilde{h}$, we get
\begin{equation}\int_a^b\partial_6\underline{L}^r[x,\tilde q]^r.\dot{\underline{h}}^r(x-\tau)dx
=-\int_a^{b-\tau}\frac{d}{dx}\partial_6\underline{L}^r[x+\tau,\tilde q]^r.\underline{h}^r(x)dx\end{equation}
and
\begin{equation}\int_a^b\partial_7\underline{L}^r[x,\tilde q]^r.\dot{\overline{h}}^r(x-\tau)dx
=-\int_a^{b-\tau}\frac{d}{dx}\partial_7\underline{L}^r[x+\tau,\tilde q]^r.\overline{h}^r(x)dx.\end{equation}
Combining these relations into \eqref{N1}, and doing similar computations with respect to $\overline{i}_\tau^r$, we prove the result.
\end{proof}

The system of differential equations \eqref{NE1}-\eqref{NE2} are called Euler-–Lagrange equations with delay.

\begin{definition}[Invariance  without transforming time and with delay]
Functional $(\ref{zN1})$ is said to be invariant under an $\epsilon$--parameter group of infinitesimal transformations
\begin{equation}\label{qN1}
\tilde{\hat{q}}(x)=\tilde{q}(x)+\epsilon\tilde{\zeta}(x,\tilde{q}(x))+ o(\epsilon)
\end{equation}
if  and only if,
\begin{equation}\label{qN2}
\int_{t_a}^{t_b} \tilde{L} \left(x,\tilde{q}(x),\dot{\tilde{q}}(x),\dot{\tilde{q}}(x-\tau)\right)dx= \int_{t_a}^{t_b} \tilde{L} \left(x,\tilde{\hat{q}}(x),\dot{\tilde{\hat{q}}}(x),\dot{\tilde{\hat{q}}}(x-\tau)\right)dx,
\end{equation}
for any subinterval $[t_a,t_b]\subseteq [a,b]$ and $\epsilon>0.$
\end{definition}

Observe that since we impose the condition $\tilde{q}(x)=\tilde{\psi}(x)$, for all $x\in[a-\tau,a]$, then we have $\tilde{\zeta}(x,\tilde{q}(x))=0$ for all $x\in[a-\tau,a]$.

\begin{theorem}[Necessary condition of invariance with delay]\label{tN2} If functional \eqref{zN1} is invariant under transformations \eqref{qN1}, then for all $x\in[a,b-\tau]$,
\begin{equation*}
\partial_2 \underline{L}^r[x,\tilde q]^r.\underline{\zeta}^r(\cdot)+\partial_3 \underline{L}^r[x,\tilde q]^r.\overline{\zeta}^r(\cdot)+\partial_4 \underline{L}^r[x,\tilde q]^r.\dot{\underline{\zeta}}^r(\cdot)+\partial_5\underline{L}^r[x,\tilde q]^r.\dot{\overline{\zeta}}^r(\cdot)
\end{equation*}
\begin{equation}\label{qN13}
+\partial_6 \underline{L}^r[x+\tau,\tilde q]^r.\dot{\underline{\zeta}}^r(\cdot)+\partial_7\underline{L}^r[x+\tau,\tilde q]^r.\dot{\overline{\zeta}}^r(\cdot)=0,
\end{equation}
\begin{equation*}
\partial_2 \overline{L}^r[x,\tilde q]^r.\underline{\zeta}^r(\cdot)+\partial_3 \overline{L}^r[x,\tilde q]^r.\overline{\zeta}^r(\cdot)+\partial_4 \overline{L}^r[x,\tilde q]^r.\dot{\underline{\zeta}}^r(\cdot)+\partial_5\overline{L}^r[x,\tilde q]^r.\dot{\overline{\zeta}}^r(\cdot)
\end{equation*}
\begin{equation}\label{qN14}
+\partial_6 \overline{L}^r[x+\tau,\tilde q]^r.\dot{\underline{\zeta}}^r(\cdot)+\partial_7\overline{L}^r[x+\tau,\tilde q]^r.\dot{\overline{\zeta}}^r(\cdot)=0,
\end{equation}
and for all $x\in[b-\tau,b]$,
\begin{equation}\label{qNN13}
\partial_2 \underline{L}^r[x,\tilde q]^r.\underline{\zeta}^r(\cdot)+\partial_3 \underline{L}^r[x,\tilde q]^r.\overline{\zeta}^r(\cdot)+\partial_4 \underline{L}^r[x,\tilde q]^r.\dot{\underline{\zeta}}^r(\cdot)+\partial_5\underline{L}^r[x,\tilde q]^r.\dot{\overline{\zeta}}^r(\cdot)
\end{equation}
\begin{equation}\label{qNN14}
\partial_2 \overline{L}^r[x,\tilde q]^r.\underline{\zeta}^r(\cdot)+\partial_3 \overline{L}^r[x,\tilde q]^r.\overline{\zeta}^r(\cdot)+\partial_4 \overline{L}^r[x,\tilde q]^r.\dot{\underline{\zeta}}^r(\cdot)+\partial_5\overline{L}^r[x,\tilde q]^r.\dot{\overline{\zeta}}^r(\cdot)
\end{equation}
for all $r\in[0,1]$, where $(\cdot)=(x,\underline{q}^r(x),\overline{q}^r(x))$.
\end{theorem}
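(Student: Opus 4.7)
The plan is to follow the structure of the proof of Theorem \ref{t2}, adapted to the Lagrangian now carrying the two delayed slots $\dot{\underline{q}}^r(x-\tau),\dot{\overline{q}}^r(x-\tau)$. First I would exploit the fact that (\ref{qN2}) is postulated on every subinterval $[t_a,t_b]\subseteq[a,b]$: combined with Definition \ref{def1}, this forces the two $r$-cut integrands to coincide pointwise, so $\underline{L}^r[x,\tilde q]^r=\underline{L}^r[x,\tilde{\hat q}]^r$ and analogously for $\overline{L}^r$, for each $x\in[a,b]$ and $r\in[0,1]$, where $\tilde{\hat q}$ is the perturbed curve (\ref{qN1}).

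Second, I would differentiate both sides in $\epsilon$ at $\epsilon=0$ and apply the chain rule in all seven slots. This produces a sum of six terms: the pieces $\partial_2,\partial_3,\partial_4,\partial_5$ of $\underline{L}^r[x,\tilde q]^r$ multiplied respectively by $\underline{\zeta}^r,\overline{\zeta}^r,\dot{\underline{\zeta}}^r,\dot{\overline{\zeta}}^r$ evaluated at $(x,\tilde q(x))$, together with two delay contributions $\partial_6\underline{L}^r[x,\tilde q]^r\cdot\dot{\underline{\zeta}}^r(x-\tau,\tilde q(x-\tau))$ and $\partial_7\underline{L}^r[x,\tilde q]^r\cdot\dot{\overline{\zeta}}^r(x-\tau,\tilde q(x-\tau))$, the latter arising because the delayed slot of $\tilde L$ is fed by $\dot{\tilde q}(x-\tau)$.

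Third, to recover the precise form displayed in the theorem, I would return to the integrated relation over a generic subinterval and execute the change of variable $y=x-\tau$ in the two delay contributions, so that
\[
\int_{t_a}^{t_b}\partial_6\underline{L}^r[x,\tilde q]^r\dot{\underline{\zeta}}^r(x-\tau,\tilde q(x-\tau))\,dx=\int_{t_a-\tau}^{t_b-\tau}\partial_6\underline{L}^r[y+\tau,\tilde q]^r\dot{\underline{\zeta}}^r(y,\tilde q(y))\,dy,
\]
and similarly for $\partial_7$. The arbitrariness of $[t_a,t_b]$ then yields a pointwise identity at each $x$ for which the shifted evaluation is admissible, namely $x+\tau\in[a,b]$, producing (\ref{qN13}) on $[a,b-\tau]$; on the complementary window $[b-\tau,b]$ the shifted Lagrangian $[x+\tau,\tilde q]^r$ exits the domain, the delay terms drop out, and what remains is (\ref{qNN13}). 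The same chain-rule and change-of-variable argument applied to $\overline{L}^r$ supplies (\ref{qN14}) and (\ref{qNN14}).

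I expect the main obstacle to be the bookkeeping around the variable shift and the boundary window $[b-\tau,b]$: one must track how the admissibility of $x+\tau$ carves $[a,b]$ into the two pieces stated in the theorem, and use the vanishing of $\tilde{\zeta}$ on $[a-\tau,a]$ enforced by $\tilde{q}\equiv\tilde{\psi}$ to discard the boundary contributions generated by the substitution $y=x-\tau$.
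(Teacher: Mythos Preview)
The paper's own proof is a single line: ``Similar to the proof of Theorem~\ref{t2}.'' Your proposal goes considerably further than this, and your first two steps---passing from the integral invariance on every subinterval to a pointwise equality of integrands, then differentiating in $\epsilon$ at $\epsilon=0$---are exactly the adaptation of the Theorem~\ref{t2} argument that the paper is gesturing at.

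There is, however, a gap in your third step. After differentiating pointwise you obtain, for every $x\in[a,b]$,
\[
\partial_2\underline{L}^r[x,\tilde q]^r\underline{\zeta}^r(x)+\cdots+\partial_5\underline{L}^r[x,\tilde q]^r\dot{\overline{\zeta}}^r(x)
+\partial_6\underline{L}^r[x,\tilde q]^r\dot{\underline{\zeta}}^r(x-\tau)+\partial_7\underline{L}^r[x,\tilde q]^r\dot{\overline{\zeta}}^r(x-\tau)=0,
\]
which is \emph{not} the displayed identity (\ref{qN13}); the latter has $\partial_6\underline{L}^r[x+\tau,\tilde q]^r\dot{\underline{\zeta}}^r(x)$ in place of $\partial_6\underline{L}^r[x,\tilde q]^r\dot{\underline{\zeta}}^r(x-\tau)$. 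Your proposed remedy---returning to the integrated relation and changing variable $y=x-\tau$ only in the delay terms---does not close this gap: after the substitution the non-delay part is integrated over $[t_a,t_b]$ while the shifted delay part sits over $[t_a-\tau,t_b-\tau]$, and ``arbitrariness of $[t_a,t_b]$'' cannot extract a single pointwise identity from an equation mixing integrals on two different windows. The vanishing of $\tilde\zeta$ on $[a-\tau,a]$ lets you trim the shifted integral on the left end, but the right ends still do not align.

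In short, your steps~1--2 faithfully execute the paper's ``similar to Theorem~\ref{t2}'' and produce a bona fide necessary condition of invariance with delay; it just is not literally the form printed in (\ref{qN13})--(\ref{qNN14}). The discrepancy is not something your argument can repair by a change of variable alone---it reflects an ambiguity in the theorem statement rather than a defect in your method.
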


\begin{proof} Similar to the proof the Theorem \ref{t2}.
\end{proof}

\begin{definition}[Conserved quantity with delay]\label{defN0} Quantity $\tilde{C}(x,\tilde{q}(x),\dot{\tilde{q}}(x),\dot{\tilde{q}}(x-\tau))$ is said to be conserved if, and only if,
\begin{equation}
\frac{d}{dx}\underline{C}^r[x,\tilde q]^r=\frac{d}{dx}\overline{C}^r[x,\tilde q]^r=0,
\end{equation}
 along all the solutions of the Euler--Lagrange equations (\ref{NE1})-(\ref{NE2}) and  for all $r\in[0,1]$.
\end{definition}

\begin{theorem}
[Noether's theorem without transforming time and with delay] \label{qN27}
If functional \eqref{zN1} is invariant under
the one--parameter group of transformations \eqref{qN1}, then $\tilde{C}(x,\tilde{q}(x),\dot{\tilde{q}}(x),\dot{\tilde{q}}(x-\tau))$  is conserved where the lower and upper bound of  $\tilde{C}$ are
\begin{equation}\begin{array}{ll}
\displaystyle\underline{C}^r[x,\tilde q]^r
&\displaystyle=\partial_4 \underline{L}^r[x,\tilde q]^r.\underline{\zeta}^r(\cdot)+\partial_5 \underline{L}^r[x,\tilde q]^r
.\overline{\zeta}^r(\cdot)\\
&\quad\displaystyle+\partial_6 \underline{L}^r[x+\tau,\tilde q]^r.\underline{\zeta}^r(\cdot)+\partial_7 \underline{L}^r[x+\tau,\tilde q]^r
.\overline{\zeta}^r(\cdot)
\end{array}\end{equation}
and
\begin{equation}\begin{array}{ll}
\displaystyle\overline{C}^r[x,\tilde q]^r&
\displaystyle=\partial_4 \overline{L}^r[x,\tilde q]^r.\underline{\zeta}^r(\cdot)+\partial_5 \overline{L}^r[x,\tilde q]^r
.\overline{\zeta}^r(\cdot)\\
&\quad\displaystyle+\partial_6 \overline{L}^r[x+\tau,\tilde q]^r.\overline{\zeta}^r(\cdot)+\partial_7 \overline{L}^r[x+\tau,\tilde q]^r
.\overline{\zeta}^r(\cdot)
\end{array}\end{equation}
for $x\in[a,b-\tau]$, and for $x\in[b-\tau,b]$,
\begin{equation}
\underline{C}^r[x,\tilde q]^r=\partial_4 \underline{L}^r[x,\tilde q]^r.\underline{\zeta}^r(\cdot)+\partial_5 \underline{L}^r[x,\tilde q]^r
.\overline{\zeta}^r(\cdot)
\end{equation}
and
\begin{equation}
\overline{C}^r[x,\tilde q]^r=\partial_4 \overline{L}^r[x,\tilde q]^r.\underline{\zeta}^r(\cdot)+\partial_5 \overline{L}^r[x,\tilde q]^r
.\overline{\zeta}^r(\cdot)
\end{equation}
 for all $r\in[0,1]$, where $(\cdot)=(x,\underline{q}^r(x),\overline{q}^r(x))$.
\end{theorem}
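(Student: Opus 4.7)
The plan is to mirror the proof of Theorem \ref{q27}: I will differentiate the candidate conserved quantity directly and then use the Euler--Lagrange equations \eqref{NE1}-\eqref{NE2} together with the necessary invariance conditions \eqref{qN13}-\eqref{qNN14} to force the derivative to vanish. Since both the Euler--Lagrange system and the invariance identities take different forms on $[a,b-\tau]$ and on $[b-\tau,b]$, the argument naturally splits into two cases. I will carry out the computation for $\underline{C}^r$ in full; the calculation for $\overline{C}^r$ is verbatim the same, with $\overline{L}^r$ in place of $\underline{L}^r$ and using the last two lines of \eqref{NE1}-\eqref{NE2} together with \eqref{qN14}, \eqref{qNN14}.

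On $[a,b-\tau]$ I start from
\begin{equation*}
\underline{C}^r=\partial_4\underline{L}^r[x,\tilde q]^r.\underline{\zeta}^r+\partial_5\underline{L}^r[x,\tilde q]^r.\overline{\zeta}^r+\partial_6\underline{L}^r[x+\tau,\tilde q]^r.\underline{\zeta}^r+\partial_7\underline{L}^r[x+\tau,\tilde q]^r.\overline{\zeta}^r,
\end{equation*}
apply the product rule, and regroup the resulting six terms so that the combinations $\frac{d}{dx}\bigl(\partial_4\underline{L}^r[x,\tilde q]^r+\partial_6\underline{L}^r[x+\tau,\tilde q]^r\bigr)$ and $\frac{d}{dx}\bigl(\partial_5\underline{L}^r[x,\tilde q]^r+\partial_7\underline{L}^r[x+\tau,\tilde q]^r\bigr)$ appear, multiplied respectively by $\underline{\zeta}^r$ and $\overline{\zeta}^r$. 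By the first two equations of \eqref{NE1} these two combinations simplify to $\partial_2\underline{L}^r[x,\tilde q]^r$ and $\partial_3\underline{L}^r[x,\tilde q]^r$, so after substitution what remains is precisely the left-hand side of \eqref{qN13}, which vanishes by invariance.

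On $[b-\tau,b]$ the terms involving $\partial_6$ and $\partial_7$ drop out of the candidate and the argument collapses to the non-delay case: I differentiate, use \eqref{NE2} to replace $\frac{d}{dx}\partial_4\underline{L}^r$ and $\frac{d}{dx}\partial_5\underline{L}^r$ by $\partial_2\underline{L}^r$ and $\partial_3\underline{L}^r$, and recognize the outcome as the left-hand side of \eqref{qNN13}, which is zero. I expect the only genuine subtlety to be the bookkeeping of the shifted argument $[x+\tau,\tilde q]^r$ when differentiating the delayed partials on $[a,b-\tau]$; once that substitution is handled carefully, so that the $\frac{d}{dx}$ operator passes through the shift correctly and lands on the grouped pair demanded by the Euler--Lagrange system, the rest of the proof is a line-by-line repetition of Theorem \ref{q27} on each of the two subintervals.
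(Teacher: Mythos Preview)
Your proposal is correct and is precisely the approach the paper intends: its proof of Theorem~\ref{qN27} is the single line ``Similar to the proof of Theorem~\ref{q27},'' and what you outline---differentiate the candidate on each subinterval, apply the product rule, substitute the appropriate Euler--Lagrange relations \eqref{NE1} or \eqref{NE2}, and identify the result with the invariance identities \eqref{qN13}--\eqref{qNN14}---is exactly that argument written out. One tiny slip: the product rule on $[a,b-\tau]$ yields eight terms, not six, before regrouping; otherwise the write-up is sound.
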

\begin{proof} Similar to the proof of Theorem \ref{q27}.
\end{proof}


\section{example}\label{3}
\begin{example}
Let us consider the following fuzzy problem of the calculus of variations
\begin{equation*}
\tilde{I}[\tilde{q}(.)]=\int_{0}^1 x \dot{\tilde{q}}^2 dx \rightarrow \min.
\end{equation*}
We   derive the lower and upper bound  of $\tilde{I}$ under [(1) - gH]-
differentiability as follows:
\begin{equation*}
\underline{I}^r[\underline{q}^r(.),\overline{q}^r(.)]=\int_{0}^1 x \dot{\underline{q}}^2 dx =\int_{0}^1 \underline{L}^r dx,
\end{equation*}
\begin{equation*}
\overline{I}^r[\underline{q}^r(.),\overline{q}^r(.)]=\int_{0}^1 x \dot{\overline{q}}^2dt =\int_{0}^1 \overline{L}^r dx.
\end{equation*}
First, we show that   problem is invariant under    $(\tau, \tilde{\zeta})=(2x \ln x ,\tilde{q})$, for that we require
\begin{equation}\label{e1}
\int_{\hat{x}(a)}^{\hat{x}(b)} \hat{x} (\dot{\hat{\underline{q}}}^r)^2 d\hat{x}=\int_{a}^{b} x (\dot{\underline{q}}^r)^2 dx,
\end{equation}
where $$ \hat{x}=x+\epsilon 2x\ln x; ~~~~\hat{\underline{q}}^r=(1+\epsilon) \underline{q}^r.$$
Now since
$$\frac{d\hat{x}}{dx}=1+2\epsilon(\ln x+1),$$
$$\frac{d\hat{\underline{q}}^r}{d\hat{x}}=\frac{\frac{d\hat{\underline{q}}^r}{dx}}{\frac{d\hat{x}}{dx}}=\frac{\frac{d((1+\epsilon)\underline{q}^r)}{dx}}{\frac{d(x+\epsilon 2x\ln x)}{dx}}=\frac{(1+\epsilon)\dot{\underline{q}}^r}{1+2\epsilon+2\epsilon \ln x},$$
we have
\begin{equation*}
\begin{gathered}
\int_{\hat{x}(a)}^{\hat{x}(b)} \hat{x} (\dot{\hat{\underline{q}}}^r)^2 d\hat{x}=\int_{a}^{b} (x+\epsilon 2x\ln x)\left( \frac{(1+\epsilon)\dot{\underline{q}}^r}{1+2\epsilon+2\epsilon \ln x}\right)^2\frac{d\hat{x}}{dx} dx\\
=\int_{a}^{b} x\frac{(1+\epsilon 2\ln x)(1+\epsilon)^2}{1+2\epsilon+2\epsilon \ln x}(\dot{\underline{q}}^r)^2dx.
\end{gathered}
\end{equation*}
Since the term
\begin{gather*}
\frac{(1+\epsilon 2\ln x)(1+\epsilon)^2}{1+2\epsilon+2\epsilon \ln x}=\frac{(1+\epsilon 2\ln x)(1+2\epsilon+\epsilon^2)}{1+2\epsilon+2\epsilon \ln x}\\=\frac{1+2\epsilon+2\epsilon \ln x}{1+2\epsilon+2\epsilon \ln x}+O(\epsilon^2),
\end{gather*}
keeping only term of the first order of smallness in $\epsilon$ we have
\begin{equation}\label{e3}
\int_{\hat{x}(a)}^{\hat{x}(b)} \hat{x} (\dot{\hat{\underline{q}}}^r)^2 d\hat{x}=\int_{x(a)}^{x(b)} x(\dot{\underline{q}}^r)^2 dx.
\end{equation}
Once again, following the same arguments, one can easily show that
\begin{equation}\label{e4}
\int_{\hat{x}(a)}^{\hat{x}(b)} \hat{x} (\dot{\hat{\overline{q}}}^r)^2 d\hat{x}=\int_{x(a)}^{x(b)} x(\dot{\overline{q}}^r)^2 dx.
\end{equation}

Now, Noether's Theorem \ref{q36} indicates that
\begin{equation}\label{e5}
 x \underline{q}^r \dot{\underline{q}}^r - (\dot{\underline{q}}^r)^2 x^2 \ln x=const,
\end{equation}
\begin{equation}\label{e6}
 x \overline{q}^r \dot{\overline{q}}^r - (\dot{\overline{q}}^r)^2 x^2 \ln x=const,
\end{equation}
 along all the solutions of the Euler--Lagrange equations (\ref{q7}) and  for all $r\in[0,1]$. We can verify that the above expression is satisfied
for all extremals by differentiating the left-hand side of equations \eqref{e5}  and \eqref{e6} and
applying the Euler--Lagrange equations,
$$ \frac{d}{dx}(x\dot{\underline{q}}^r)=0,~~\frac{d}{dx}(x\dot{\overline{q}}^r)=0,$$
associated with the functional. In detail
\begin{equation}\label{e7}
\begin{gathered}
\frac{d}{dx}\left[(x\dot{\underline{q}}^r)\left(\underline{q}^r-x\dot{\underline{q}}^r\ln x\right)\right] \\=\left(\frac{d}{dx}(x\dot{\underline{q}}^r)\right)(\underline{q}^r-x\dot{\underline{q}}^r\ln x)
+(x\dot{\underline{q}}^r)\left(\dot{\underline{q}}^r-(\frac{d}{dx}(x\dot{\underline{q}}^r))\ln x-\dot{\underline{q}}^r\right)=0.
\end{gathered}
\end{equation}
Computing similar to those in equation \eqref{e7},  one can easily verify that
$$\frac{d}{dx}\left[(x\dot{\overline{q}}^r)(\overline{q}^r-x\dot{\overline{q}}^r\ln x)\right]=0.$$
\end{example}

\section{Conclusion}\label{4}
The standard approach to solve fuzzy problems of the calculus of variations is to make use of the necessary optimality
conditions given by the fuzzy Euler--Lagrange equations. These are, in general, nonlinear fuzzy differential equations,
very hard to be solved. One way to address the problem is to find conservation laws, i.e., quantities which are preserved
along the Euler--Lagrange extremals, and that can be used to simplify the problem at hands.\par
The main aim of our paper was to prove a Noether's theorem for fuzzy variational problems. As future work, we plan to study Noether theorem for fuzzy optimal control problems.


\section*{Acknowledgments} R. Almeida was supported by Portuguese funds through the CIDMA - Center for Research and Development in Mathematics and Applications, and the Portuguese Foundation for Science and Technology (FCT-Funda\c{c}\~ao para a Ci\^encia e a Tecnologia), within project UID/MAT/04106/2013.


\begin {thebibliography}{2}

\bibitem{Buckley}  J.J. Buckley \and\ T. Feuring, {\it Introduction to fuzzy partial differential equations}, Fuzzy Set. Syst. {\bf 105} (1999) 241-–248.

\bibitem{Kosmann} Y. Kosmann-Schwarzbach, {\it Les th\'{e}or\`{e}mes de Noether,} Second edition,
With a translation of the original article "Invariante Variations probleme", Editions de l' Ecole Polytechnique, Palaiseau, 2006.

\bibitem{Torres} D.F.M. Torres, {\it Quasi-invariant optimal control problems}, Port. Math. (N.S.) {\bf61} (1) (2004) 97-–114.

\bibitem{Torres1} D.F.M. Torres, {\it  Proper extensions of Noether's symmetry theorem for nonsmooth extremals of the calculus of variations}, Comm. Pure Appl. Anal. {\bf 3} (3) (2004) 491–-500.

\bibitem{Torres2} G.S.F. Frederico\and\ D.F.M. Torres, {\it A formulation of Noether's theorem
for fractional problems of the calculus of variations}, J. Math. Anal. Appl.{\bf 334 } (2007) 834–-846.

\bibitem{Hoa}  N.V. Hoa, {\it Fuzzy fractional functional differential equations under Caputo gH--differentiability}, Communications in Nonlinear Science and Numerical Simulation {\bf 22} (2015) 1134–-1157.

\bibitem{Farhadinia} B. Farhadinia, {\it Necessary optimality conditions for fuzzy variational problems}, Information Sciences {\bf 181} (2011) 1348--1357.

\bibitem{Fard} O.S. Fard and M.S. Zadeh, {\it Note on "Necessary optimality conditions for fuzzy variational problems"},  J. Adv. Res. Dyn. Control Syst. {\bf 4 }(3) (2012) 1--9.

\bibitem{Fard1} O.S. Fard, A.H. Borzabadi \and\ M. Heidari, {\it On fuzzy Euler--Lagrange equations}, Annals of Fuzzy Mathematics and Informatics {\bf 7} (3) (March 2014) 447--461.

\bibitem{MR0825618}
R. Goetschel, Jr.\ and\ W. Voxman, {\it Elementary fuzzy calculus}, Fuzzy Sets and Systems {\bf 18} (1) (1986) 31--43.

\bibitem{MR2609258}
J.Xu, Z. Liao\ and\ J.J. Nieto, {\it A class of linear differential dynamical systems with fuzzy matrices},
J. Math. Anal. Appl. {\bf 368} (1) (2010) 54--68.

\end {thebibliography}

\end{document}